%
%
\documentclass{article}
\usepackage[lang=british]{ems-jfg}

\usepackage{etex,bm, amscd,amsfonts,amsthm, mathrsfs, tikz, multicol, subcaption}
\usepackage[justification=centering]{caption}
\usepackage[all,cmtip]{xy}

\newtheorem{theorem}{Theorem}[section]

\newtheorem{lemma}[theorem]{Lemma}

\newtheorem{remark}[equation]{Remark}

\theoremstyle{definition}
\newtheorem{definition}[equation]{Definition}
\newtheorem{example}[equation]{Example}


\newcommand{\E}{ {\mbox{$\mathcal{E}$}}}

                                                
\newcommand{\Z}{ {\mbox{$\mathbb{Z} $}}}

\newcommand{\R}{ {\mbox{$\mathbb{R} $}}} 
\newcommand{\C}{ {\mbox{$\mathbb{C} $}}} 
    
\newcommand{\He}{ {\mbox{$\mathbb{H} $}}}

\newcommand{\D}{ {\mbox{$\mathcal{D}$}}}

\newtheorem{prop}[theorem]{Proposition}



\newcommand{\Mink}{ {\mbox{$\overline{\textup{dim}}_B$}}}
\newcommand{\M}{ {\mbox{$\mathcal{M}$}}}  
\newcommand{\de}{ \delta} 
\newcommand{\ga}{ \gamma}

\newcommand{\om}{ \Omega }
\newcommand{\ze}{ \zeta} 
\newcommand{\zet}{ \widetilde{\zeta} }
\newcommand{\zeom}{ \zeta_A(\cdot,\om)} 
\newcommand{\ep}{\varepsilon} 

\numberwithin{equation}{section}

\begin{document}

\title{Fractal Zeta Functions and Complex Dimensions of Ahlfors Metric Measure Spaces}
\titlemark{Complex Dimensions of Ahlfors Spaces}



\emsauthor{1}{Michel L. Lapidus}{M.L. Lapidus}
\emsauthor{2}{Sean Watson}{S. Watson}


\emsaffil{1}{University of California Riverside, Department of Mathematics, Skye Hall,
	900 University Avenue, Riverside, CA 92521-0135, USA \email{lapidus@ucr.edu}}
\emsaffil{2}{California Polytechnic State University, Department of Mathematics, 1 Grand Ave, San Luis Obispo, CA 93407 \email{swatso01@calpoly.edu}}

\classification[46E30]{11M41, 28A75, 28A80}

\keywords{Fractal zeta functions, complex dimensions, Ahlfors spaces, metric measure spaces, Ahlfors dimension, Hausdorff measure and dimension, Minkowski content and dimension, distance and tube zeta functions, Heisenberg group, Laakso spaces and graphs, Cantor sets, patchwork spaces}

\begin{abstract}
While classical analysis dealt primarily with smooth spaces, much research has been done in the last half century on expanding the theory to the nonsmooth case. Metric Measure spaces are the natural setting for such analysis, and it is thus important to understand the geometry of subsets of these spaces. In this paper we will focus on the geometry of Ahlfors regular spaces, Metric Measure spaces with an additional regularity condition. Historically, fractals have been studied using different ideas of dimension which have all proven to be unsatisfactory to some degree. The theory of complex dimensions, developed by M.L. Lapidus and a number of collaborators, was developed in part to better understand fractality in the Euclidean case and seeks to overcome these problems. Of particular interest is the recent theory of complex dimensions in higher-dimensional Euclidean spaces, as studied by M.L.Lapidus, G. Radunovi\'c, and D. \u Zubrini\'c, who introduced and studied the properties of the distance and tube zeta functions, $\zeta_A$ and $\zet_A$. We will show that this theory of complex dimensions naturally generalizes to the case of Ahlfors regular spaces, as the distance and tube zeta functions can be modified to apply to these spaces and all of its main properties carry over.  We also provide a selection of examples in Ahlfors spaces, as well as hints that the theory can be expanded to a more general setting. 
\end{abstract}

\maketitle


\section{Background}

While analysis has classically studied smooth spaces, much work has been done in the past half century in expanding the theory to the non-smooth cases. In particular, in 1971 Coiffman and Weiss introduced in \cite{CoWe} the notion of {\it spaces of homogeneous type}, a space with a quasimetric and a measure that satisfies a doubling condition, as the natural spaces for which the theory of Calder\'on--Zygmund singular integrals naturally extends. This suddenly presented a method to perform harmonic analysis  on these much more general sets. If we make the further restriction that the quasimetric be instead a metric, in which case the space will be called a {\it Metric Measure space} (or MM space for short). These spaces have since given rise to activity in many other analytical areas of mathematics, such as the study of function spaces, partial differential equations, probability theory, and analysis on fractals. See e.g.  \cite{DaMcCS},\cite{Hei}. 

\begin{definition}
	A {\it Metric Measure Space} (or MM space) is a set $X$ equipped with a metric $d$ and a positive Borel measure $\mu$ that is doubling; there exists a positive constant $C$ such that
	
	$$\mu(B_d(x,2r))\leq C\mu(B_d(x,r)),$$
	where $B_d(x,r)=B(x,r)$ denotes the closed ball of $X$ with center $x\in X$ and radius $r>0$.
\end{definition}

Ideally, we wish to understand the geometry of subsets of such spaces, particularly through the methods of fractal geometry. 

Historically, fractals have been studied under an approach to measure their ``size" and complexity (or ``roughness"), either through the Minkowski or Hausdorff dimensions. However, these dimensions leave many sets we would like to call fractal indistinguishable from standard Euclidean space. The theory of {\it complex dimensions}, developed by Lapidus and a number of collaborators since 1990, seeks to alleviate this problem and establish further geometric information intrinsic to a fractal; see, for example, \cite{Lap1,LaRa1,LaRaZu,LaRaZu2,LaRaZu3,LaRaZu4,LaRaZu5,LaRaZu6,LaRaZu7,LaRaZu8,LaRaZu9,LaSa,LaFra}  . In particular, both the real and imaginary parts of the complex dimensions encode information about the geometric, spectral and dynamical oscillations of a fractal space and its associated fractal drum (bounded open subset with fractal boundary).

In particular, we wish to generalize the recent theory introduced by Lapidus, Radunovi\'c, and \u Zubrini\'c in \cite{LaRaZu} (see also \cite{LaRaZu2}-\cite{LaRaZu9}). Here, the distance zeta function (and related zeta functions) are introduced, wherein bounded subsets $A$ of $\R^N$ are analyzed by studying the tubular neighborhood around the set. Given a bounded set $A \subseteq \mathbb{R}^n$, we define the {\em closed t-tubular neighborhood of }$A$ to be
\[ A_\delta = \{ x\in\R^N : d(x,A)\leq\delta \}. \] Note that the {\em open} tubular neighborhood was used in \cite{LaRaZu}, but the results hold in Euclidean space for closed tubular neighborhoods as well and this small modification will be required for the more general setting.

The {\em distance zeta function} is then defined, for all $s\in\mathbb{C}$ with $\text{Re}s$ sufficiently large,  as 
\[ \ze_A(s) = \int_{A_\de} d(x,A)^{s-N} \text{d}x .\]
This zeta function has many interesting properties that make it worthwhile to study. It is holomorphic in the open half-plane to the right of the upper Minkowski dimension, $D$ of set $A$ (in fact, $D$ is the abcissa of convergence of $\ze_A$), and if the function can be meromorphically continued to a connected open subset of $\C$ containing the right half-plane $\{\text{Re}s\geq D\}$, the poles of the zeta function give geometric information about $A$. We call the collection of poles the {\em complex dimensions} of $A$. 

In order to define such an analogous theory in more general settings, we first restrict ourselves to subsets of MM spaces that satisfy an extra regularity condition; they are called {\it Ahlfors regular spaces}. This added regularity condition roughly translates to requiring the measure of a ball to change consistently with any changes to the radius, regardless of where it is centered (see definition below). The way the measure changes as a power of the radius allows us to define an ambient dimension of the space in which we are working, called the {\it Ahlfors regularity dimension}.  This dimension allows us to justify the geometric information obtained by any fractal zeta function. In Chapter 5, we will study weakening this condition, and the problems that occur in doing so.

Here and thereafter, $B_d(x,r)$ denotes the closed ball of $(X,d)$ of center $x$ and radius $r$; often, we will omit the subscript $d$, when no confusion can follow.

\begin{definition}
	A MM space $(X,d,\mu)$ is {\it Ahlfors regular of dimension Q} (here on, {\it regular}) if $(X,d)$ is locally compact and $\mu$ locally finite with dense support satisfying, for some $K\geq 1$,
	
	$$K^{-1}r^Q\leq\mu(B(x,r))\leq Kr^Q ,$$ for all $x\in X, 0<r\leq\text{diam}X$.
	
	If only the upper (resp. lower) bound is satisfied, we call the space {\it upper (resp. lower) Ahlfors regular of dimension Q}.
\end{definition}

Despite requiring this additional regularity condition, many spaces that arise naturally in various areas of mathematics are Ahlfors regular. Important examples are the symbolic Cantor set, Laakso space and finite-dimensional Riemannian and sub-Riemannian manifolds; in particular, the Heisenberg group and Heisenberg type groups. Many self-similar fractals, such as the Sierpi\'nski gasket, are Ahlfors regular for the Hausdorff metric induced by the embedding in Euclidean space. However, in the theory of analysis on fractals, there are identifications with other spaces, such as the {\it harmonic Sierpi\'nski gasket}, which is Ahlfors regular for the Hausdorff measure associated with a natural geodesic metric (see \cite{Kaj}, \cite{Ki}).

The Hausdorff measure plays an important role in the study of Ahlfors spaces. This measure is defined on any metric space $(X,d)$ in the following manner:

\begin{definition}
	For any $s$ nonnegative, $A\subset X$, define the {\it $s$-dimensional Hausdorff outer measure}:
	\vspace{-.5em}
	$$H^s(A)=\lim_{\delta\rightarrow 0^+}H^s_\delta=\lim_{\delta\rightarrow 0^+}\inf\bigg\{\sum_{j=1}^\infty (\text {diam}E_j)^s : A\subseteq\bigcup_{j=1}^\infty E_j,\text {diam}E_j<\delta\bigg\}.$$
	The {\it Hausdorff dimension} $D_H$ of a set $A$ is defined by
	\begin{align*}
		D_H &=\inf\{s\geq 0:H^s(A)=0\} = \inf\{s\geq 0:H^s(A)<\infty\}	
		\\ &=\sup\{s\geq 0:H^s(A)=+\infty\}.
	\end{align*}
	We call $H^D$ the {\it D-dimensional Hausdorff measure} when restricted to Borel sets, where $D:=D_H$. Recall that the Hausdorff measure is a well defined $\sigma$-additive measure on the Borel $\sigma$-algebra of $X$; see e.g. \cite{Fol} or \cite[Theorem 6.14, p.207]{LaRa1}.
\end{definition}

It turns out that the measure $\mu$ of a regular $Q$-dimensional space and $H^Q$ are equivalent in the sense that there is a positive constant $C$ depending only on $K$ such that
$$C^{-1}\mu(E)\leq H^Q(E)\leq C\mu(E),~~~\text{ for all Borel sets } E\subseteq X.$$
In particular, if the MM space triple \((X,d,\mu)\) is regular of dimension $Q$, then so is \((X,d,H^D)\) and $Q=D_H$, the Hausdorff dimension of $(X,d)$..

Thus, we can always work under the assumption that we are using the $Q$-dimensional Hausdorff measure, and for ease of notation we will henceforth denote the measure of a set $A$ by $|A|$.

In Ahlfors spaces, we will also find that we need to subtly change the definition of the tubular neighborhood of a set:

\begin{definition}
	Let $\E$ be an Ahlfors regular space. Given $A\subseteq\E$ bounded, define the {\it $t$-neighborhood of $A$} (or {\it $t$-tubular neighborhood of $A$}) by $A_t:=\{x\in\E:d(x,A)\leq t\}$, where $t>0$. Note that in the setting of Ahlfors regular spaces, we take the {\em closed neighborhood}.
\end{definition}

Finally, we will need a proper definition of Minkowski content and Minkowksi dimension. Given a $Q$-regular Ahlfors space, we propose using the ambient dimension to generalize the Euclidean definition as follows:

\begin{definition}
	The {\it r-dimensional Minkowski upper content of} $A$ is defined as 
	\[\mathcal{M}^{*r}=\limsup_{t\rightarrow 0^+}\dfrac{|A_t|}{t^{Q-r}}.\]
	We then define the {\it upper Minkowski dimension} by
	\begin{align*}
		\overline{\text{dim}}_B A&=\inf\{r\geq 0:\mathcal{M}^{*r}(A)=0\}=\inf\{r\geq 0:\mathcal{M}^{*r}(A)<\infty\}
		\\ &=\sup\{r\geq 0:\mathcal{M}^{*r}(A)=+\infty\}.
	\end{align*}
	Similarly, we can define the {\it r-dimensional Minkowski lower content} as
	\[ \M_*^r = \liminf\limsup_{t\rightarrow 0^+}\dfrac{|A_t|}{t^{Q-r}} \]
	and the {\it lower Minkowski dimension} by
	\begin{align*}
		\underline{\textnormal{dim}}_B A &= \inf\{r\geq 0:\mathcal{M}_*^{r}(A)=0\} = \inf\{r\geq 0:\mathcal{M}_*^{r}(A)<\infty\}\\
		  &= \sup\{r\geq 0:\mathcal{M}_*^{r}(A)=+\infty\}.
	\end{align*}

	If $\overline{\text{dim}}_B A=\underline{\textnormal{dim}}_B A$, we say that $D=\text{dim}_B A:= \Mink A$ is the {\it Minkowski dimension}. If, further, $\M^{*D}=\M_*^D$, we call this common value the {\it D-dimensional Minkowski content of} $A$ and simply denote it by $\M^D$. If, in addition, $A$ is {\em Minkowski nondegenerate}, i.e., if $0< \M^D(A)<\infty$, then $A$ is said to be \emph{Minkowski measurable}.   
	
\end{definition}

\section{Fractal Zeta Functions}

In this section, we will study the basic properties of the distance zeta function and of the tube zeta function in Ahlfors spaces. In particular, our main result is that in Ahlfors spaces, the distance zeta function is holomorphic in the right half-plane $\{ \text{Re }s>\Mink A\}$, along with mild assumptions under which this bound cannot be improved. We also show that the abscissa of convergence of our distance zeta function on the right-hand side is $\Mink A$, the upper Minkowski dimension of the set $A$. We then obtain similar results for the tube zeta function.

The proof will require the following lemma originally stated, without proof, by Harvey and Polking in \cite{HarPol}. The following proof is due to \u Zubrini\'c in \cite{Zu3}, which uses dyadic decomposition. It is given in detail and extended to Ahlfors spaces for the needs of the theory in \cite{LaRaZu}. While originally written for Euclidean spaces, the proof still holds under the appropriate modifications for Ahlfors spaces.

\begin{lemma}[\textbf{Harvey--Polking Lemma}]\label{lem: 1}
	
	Assume that $A$ is an arbitrary bounded subset of an Ahlfors regular space $\E$ and let $\delta$ be any fixed positive number. If $\gamma$ is real and $\gamma\in(-\infty,Q-\Mink A)$, then
	\[\int_{A_\delta}d(x,A)^{-\gamma} \text{d}x<\infty .\] 
	
\end{lemma}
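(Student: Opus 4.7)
The plan is to adapt the classical Euclidean dyadic-shell argument of \u Zubrini\'c described above, substituting the $Q$-dimensional Hausdorff measure for Lebesgue measure and the Ahlfors regularity dimension $Q$ for the Euclidean ambient dimension $N$. Set $D := \Mink A$, so by hypothesis $D < Q - \ga$. The case $\ga \leq 0$ is immediate: the integrand $d(x,A)^{-\ga}$ is bounded by $\de^{-\ga}$ on $A_\de$ (with the convention $0^0 = 1$ when $\ga = 0$), and $A_\de$ has finite measure because $A$ is bounded, hence lies in some ball whose measure is controlled by the Ahlfors upper bound. So henceforth I assume $\ga > 0$.

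Choose any $r$ with $D < r < Q - \ga$, which is possible by hypothesis. By the definition of $\Mink A$ one has $\M^{*r}(A) = 0$, so there exist $C > 0$ and $t_0 \in (0, \de]$ with $|A_t| \leq C\, t^{Q-r}$ for every $t \in (0, t_0]$. Writing $A_\de = (A_\de \setminus A_{t_0}) \cup A_{t_0}$, the outer region contributes at most $t_0^{-\ga}|A_\de| < \infty$, since $d(x,A) \geq t_0$ there. For the inner region, introduce the dyadic shells
$$S_k := \bigl\{ x \in \E : 2^{-k-1} t_0 < d(x,A) \leq 2^{-k} t_0 \bigr\}, \quad k = 0, 1, 2, \ldots,$$
which, together with the negligible set $A$ itself (note $|A|=0$ follows from letting $t \to 0^+$ in the Minkowski estimate, since $D < Q$), partition $A_{t_0}$. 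On $S_k$ the integrand is bounded by $(2^{-k-1} t_0)^{-\ga}$, while $|S_k| \leq |A_{2^{-k} t_0}| \leq C\,(2^{-k} t_0)^{Q-r}$, so
$$\int_{S_k} d(x,A)^{-\ga}\,dx \;\leq\; C\, 2^{\ga}\, t_0^{\,Q - r - \ga}\, 2^{-k(Q - r - \ga)}.$$
Because $r < Q - \ga$, the exponent $Q - r - \ga > 0$, and summing over $k$ gives a convergent geometric series, completing the argument.

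I expect the main obstacle to be bookkeeping rather than conceptual: one must verify that each step of the Euclidean proof transfers cleanly. Crucially, the denominator $t^{Q-r}$ in the definition of $\M^{*r}$, built from the ambient Ahlfors dimension $Q$, is exactly what produces the polynomial decay $|A_t| = O(t^{Q-r})$ driving the geometric series; the Ahlfors upper regularity bound guarantees $|A_\de| < \infty$ for bounded $A$; and the measurability of $x \mapsto d(x,A)^{-\ga}$ against $H^Q$ is standard. No new difficulties arise from the absence of a Euclidean structure, which is precisely the point of the lemma.
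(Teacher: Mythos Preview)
Your proof is correct and follows essentially the same dyadic-shell argument as the paper: both choose an intermediate exponent between $\Mink A$ and $Q-\gamma$, use the resulting bound $|A_t|\leq C\,t^{Q-r}$, and sum a convergent geometric series over dyadic annuli. The only cosmetic differences are that the paper obtains the Minkowski bound on all of $(0,\delta]$ via a supremum/continuity argument (rather than on $(0,t_0]$ with a separate outer-shell estimate), and that the residual null set is $\overline{A}=\{d(\cdot,A)=0\}$ rather than $A$---your limiting argument shows $|\overline{A}|=0$ just as well, so this does not affect correctness.
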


\begin{proof}
	
	If $\gamma\in (-\infty,0]$, then $x\mapsto d(x,A)^{-\gamma}$ is continuous and bounded on $A_{\delta}$ and the claim follows immediately. We will therefore focus on the case where $\Mink A<Q$. 
	
	Let $s\in(\Mink A, Q-\gamma)$ be arbitrary. Note that since $\gamma<Q-\Mink A$, this interval is nonempty. For $t\in(0,\delta]$, the function $t\mapsto |A_t|/t^{Q-s}$ is continuous, by the continuity of measures. Since $\M^{*s}(A)=0$ by assumption, the supremum of this function must be finite. Denoting the supremum by $C(\de)$,  then $|A_t|\leq C(\de)t^{Q-s}$, for all $t\in(0,\de)$.
	
	We will now use a type of dyadic decomposition of the set $A_{\de}\setminus\overline{A}$:
	\[A_{\de}=\overline{A}\cup\Big(\bigcup_{i=1}^\infty B_i\Big),~~ B_i:=A_{2^{-i}\de}\setminus A_{2^{-i-1}\de} .\]
	
	Since we have assumed $\Mink A<Q$, we have that $|\overline{A}|=0$. Thus
	\[ I(A):=\int_{\overline{A}} d(x,A)^{-\ga} \text{d}x=0 .\]
	
	If $\de>1$, then $d(x,A)^{-\ga}$ is bounded on $A_\de\setminus A_1$. Thus, we can assume $\de\in(0,1]$. Using our dyadic decomposition and the assumption $0<\ga<Q-s$, we have:
	
	\begin{align*}
		\int_{A_{\delta}} d(x,A)^{-\ga} \text{d}x  &= I(A)+\sum_{i=0}^\infty \int_{B_i}d(x,A)^{-\ga}\text{d}x \\
		&\leq I(A)+\sum_{i=0}^\infty\int_{A_{2^{-i}\delta}}d(x,A)^{-\ga}\text{d}x \\
		&\leq I(A)+\sum_{i=0}^\infty (2^{-i-1}\de)^{-\ga}|A_{2^{-i}\de}| \\
		&\leq I(A)+C(\de)\sum_{i=0}^\infty (2^{-i-1}\de)^{-\ga}(2^{-i}\de)^{Q-s} \\
		&\leq I(A)+ \frac{2^\ga C(\de)}{1-2^{\gamma-Q+s}}\de^{Q-s-\gamma}<\infty.
	\end{align*}
	
	This completes the proof.
\end{proof}

We can think of this lemma as an extension of the fact that, in $\R^N$, if $\gamma<N$ then the integral of $|x|^{-\gamma}$ over the unit ball is finite. 

The left-hand integral of the Harvey--Polking lemma can be viewed as a prototype of the distance zeta function, where the exponent is only real-valued. In order to prove a useful identity, we will need the following theorem, stated in Folland's classic real analysis text \cite{Fol}.

\begin{lemma}[\text{G.B. Folland, \cite[p. 198]{Fol}}]
	Let $f\in(X,\M,\mu)$ be a nonnegative measurable function in a measure space $X$, i.e., $f:X\rightarrow [0,+\infty]$, and let $0<\alpha<\infty$.   Then
	\[\int_X f(x)^\alpha\text{d}x = \alpha\int_0^{+\infty} t^{\alpha-1}|\{f>t\}|\text{d}t,\]
	where $\{f>t\}:=\{x\in X: f(x)>t\}$ and $|\{f>t\}|$ denotes the $M$-dimensional Lebesgue measure of $\{f>t\}$.
\end{lemma}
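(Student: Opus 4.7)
The plan is to establish the identity by rewriting $f(x)^\alpha$ as a layer-cake integral and then swapping the order of integration via Tonelli's theorem.

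First, I would use the elementary pointwise identity: for any $y\in[0,+\infty]$ and $\alpha\in(0,\infty)$,
\[
y^\alpha \;=\; \alpha\int_0^y t^{\alpha-1}\,\text{d}t \;=\; \alpha\int_0^{+\infty} t^{\alpha-1}\,\mathbf{1}_{\{t<y\}}\,\text{d}t,
\]
with the convention $y^\alpha=+\infty$ when $y=+\infty$ (in which case both sides are $+\infty$). Applying this at $y=f(x)$ and noting that $\{t<f(x)\}=\{x\in\{f>t\}\}$, we obtain
\[
f(x)^\alpha \;=\; \alpha\int_0^{+\infty} t^{\alpha-1}\,\mathbf{1}_{\{f>t\}}(x)\,\text{d}t,
\qquad\text{for every }x\in X.
\]

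Next, I would integrate this identity over $X$ with respect to $\mu$. Since $f$ is measurable and nonnegative, the function $(x,t)\mapsto t^{\alpha-1}\mathbf{1}_{\{f>t\}}(x)$ is jointly measurable on $X\times(0,+\infty)$ endowed with the product $\sigma$-algebra $\mathcal{M}\otimes\mathcal{B}((0,\infty))$: indeed, the set $\{(x,t):f(x)>t\}$ is the preimage of $(0,+\infty]$ under the measurable map $(x,t)\mapsto f(x)-t$. The integrand being nonnegative, Tonelli's theorem applies and yields
\[
\int_X f(x)^\alpha\,\text{d}x \;=\; \alpha\int_X\!\!\int_0^{+\infty} t^{\alpha-1}\,\mathbf{1}_{\{f>t\}}(x)\,\text{d}t\,\text{d}\mu(x)
\;=\; \alpha\int_0^{+\infty} t^{\alpha-1}\left(\int_X \mathbf{1}_{\{f>t\}}(x)\,\text{d}\mu(x)\right)\text{d}t.
\]
Recognising the inner integral as $\mu(\{f>t\})=|\{f>t\}|$ completes the proof.

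The only subtlety — and the one place worth checking carefully — is the joint measurability required to invoke Tonelli; once this is in place, the rest is a direct manipulation. No hypothesis of $\sigma$-finiteness on $\mu$ is needed because Tonelli's theorem for nonnegative integrands holds without it (the $\sigma$-finiteness being required only for Fubini on signed integrands).
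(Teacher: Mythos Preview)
Your layer-cake argument is correct and is the standard proof; the paper itself does not prove this lemma but simply quotes it from Folland's text, where essentially the same argument appears.

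One small correction to your closing remark: Tonelli's theorem \emph{does} require $\sigma$-finiteness of both factors (see, e.g., Folland's own Theorem~2.37); the distinction between Tonelli and Fubini is nonnegativity versus integrability, not the presence or absence of $\sigma$-finiteness. This does not damage the lemma. In the paper's applications the measure is finite on the sets in question, so $\sigma$-finiteness is automatic. If you want the statement in full generality, you can avoid Tonelli altogether: verify the identity first for nonnegative simple functions (a direct finite computation), and then pass to arbitrary measurable $f\ge 0$ by the monotone convergence theorem applied to each side, using that $f_n\uparrow f$ forces $\{f_n>t\}\uparrow\{f>t\}$ for every $t$.
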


This next lemma, which is an exact counterpart of \cite[Lemma 2.1.4]{LaRaZu}, establishes the identity that changes the integral over the distance function to the integral of the tubular volume of $A_t$. We can view this, again, as a prototype of the {\em tubular zeta function} defined further on in Definition \ref{tube}.

\begin{lemma} \label{th: 2}
	Let $A$ be any bounded set in an Ahlfors regular space $\E$. Then, for every value of the real parameter $\ga$ in the open interval $(-\infty, Q-\Mink A)$, the following identity holds:
	\[\int_{A_\delta}d(x,A)^{-\ga}\text{d}x=\de^{-\ga}|A_\delta|+\ga\int_0^\delta t^{-\ga-1}|A_t|\text{d}t.\]
	Furthermore, both integrals in the above identity are finite.
\end{lemma}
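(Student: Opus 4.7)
My plan is to prove the identity by combining a pointwise calculus identity with Tonelli's theorem. First, observe that if $\gamma \leq 0$, the map $x \mapsto d(x,A)^{-\gamma}$ is continuous and bounded on the bounded set $A_\delta$, so both sides are trivially finite and the $\gamma = 0$ case reduces to $|A_\delta| = |A_\delta|$. Hence the nontrivial range is $0 < \gamma < Q - \Mink A$, where Lemma \ref{lem: 1} already guarantees finiteness of the left-hand side. Moreover, $\gamma > 0$ forces $\Mink A < Q$, which, as noted in the proof of Lemma \ref{lem: 1}, yields $|\overline{A}| = 0$; hence $d(x,A) > 0$ almost everywhere on $A_\delta$.

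On $A_\delta \setminus \overline{A}$, elementary integration of $t^{-\gamma - 1}$ gives the pointwise identity
\[ d(x,A)^{-\gamma} = \delta^{-\gamma} + \gamma\int_{d(x,A)}^{\delta} t^{-\gamma - 1}\,dt. \]
I would integrate both sides over $A_\delta$. The first term contributes $\delta^{-\gamma}|A_\delta|$ at once. For the second, the integrand is nonnegative, so Tonelli's theorem permits exchanging the order of integration. For each fixed $t \in (0,\delta]$, the slice $\{x \in A_\delta : d(x,A) \leq t\}$ equals $A_t$ since $A_t \subseteq A_\delta$; this yields
\[ \gamma\int_{A_\delta}\int_{d(x,A)}^{\delta} t^{-\gamma - 1}\,dt\,dx = \gamma\int_0^{\delta} t^{-\gamma - 1}|A_t|\,dt, \]
which is the asserted identity. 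Essentially the same calculation can be repackaged by applying Folland's layer-cake formula to $g(x) := d(x,A)^{-1}$ with exponent $\alpha = \gamma$, splitting the resulting $t$-integral at $t = 1/\delta$, and substituting $u = 1/t$ in the tail; both routes produce the two summands on the right-hand side.

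Finally, I would verify finiteness of the two terms on the right. The first, $\delta^{-\gamma}|A_\delta|$, is finite because $A_\delta$ is bounded. For the second, I would reuse the estimate from the proof of Lemma \ref{lem: 1}: choosing $s \in (\Mink A,\, Q - \gamma)$, one has $|A_t| \leq C(\delta)\,t^{Q - s}$ for all $t \in (0,\delta]$, so the integrand is dominated by $C(\delta)\,t^{Q - s - \gamma - 1}$ near zero, which is integrable since $Q - s - \gamma > 0$. I do not expect any serious obstacle beyond the bookkeeping around the null set $\overline{A}$, where the pointwise identity fails; the hypothesis $\gamma > 0$ renders that set negligible, so Tonelli's theorem applies cleanly on the complement.
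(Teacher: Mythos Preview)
Your argument for $\gamma > 0$ is correct and essentially matches the paper's case~(a): the paper applies Folland's layer-cake formula to $f = d(\cdot,A)^{-1}$ with exponent $\alpha = \gamma$ and then substitutes $\tau = 1/t$ in the tail, which is exactly the alternative route you sketch; your pointwise identity plus Tonelli is the same computation unwound.

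There is, however, a genuine gap for $\gamma < 0$. You note that both sides are finite and then declare the ``nontrivial range'' to be $\gamma > 0$, but finiteness is not the only claim: the \emph{identity} itself must still be established when $\gamma < 0$, and you never return to it. The paper treats this as a separate case~(c), applying the layer-cake formula to $f = d(\cdot,A)$ (not its reciprocal) with exponent $\alpha = -\gamma$, using that $\{f > t\} = A_\delta \setminus A_t$ for $0 < t < \delta$. Your own pointwise identity in fact holds for every $\gamma \neq 0$ (and for $\gamma < 0$ it holds even on $\overline A$, where both sides vanish), so the cleanest repair within your framework is to observe that for $\gamma < 0$ the iterated integral is absolutely convergent---since $t^{-\gamma-1}|A_t| \le |A_\delta|\,t^{-\gamma-1}$ is integrable on $(0,\delta)$---and invoke Fubini rather than Tonelli. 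A smaller point: for $\gamma = 0$ the lemma also asserts $\int_0^\delta t^{-1}|A_t|\,dt < \infty$, which is not quite trivial; the paper devotes its case~(b) to this, and your closing estimate (taking $s \in (\Mink A, Q)$) would cover it, but you should invoke it explicitly rather than folding the case into the word ``trivially.''
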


\begin{proof}
	We consider the following three cases: \smallskip
	
	$(a)$ Case where $\ga>0$: Since $0<\ga<Q-\Mink A$, we will proceed similarly to \cite{Zu5}.  We use Lemma 2.2 with $\alpha=\ga$ and the Borel measurable function $f:\E\rightarrow[0,+\infty]$ given by
	\[f(x):= \begin{cases} d(x,A)^{-1}, & \mbox{for } x\in A_\delta, \\
		0, & \mbox{for } x\in\E\setminus A_\delta . \end{cases}  \]
	By definition, $f(x)=+\infty$ for $x\in\overline{A}$; further, since $\Mink A<Q$, we have $|\overline{A}|=0$. Note that the set $\{x\in\E:f(x)>t\}$ is equal to $A_{1/t}$ for $t>\de^{-1}$ and to the constant set $A_\delta$ for $t\in(0,\de^{-1})$. Therefore,
	\begin{align*}
		\int_{A_\delta}d(x,A)^{-\ga}\text{d}x &= \ga\Big(\int_0^{1/\delta}+\int_{1/\delta}^{+\infty}\Big) t^{\ga-1}|\{f>t\}|\text{d}t \\
		&= \ga|A_\delta|\int_0^{1/\delta}t^{\ga-1}+\ga\int_{1/\delta}^{+\infty}t^{\ga-1}|A_{1/t}|\text{d}t .
	\end{align*}
	The result follows by a change of variable $\tau=1/t$ in the last integral. In order to show this integral is finite, let $\ep>0$ be small enough so that $\ga\in(0,Q-D-\ep)$, where $D:=\Mink A$. Then $\M^{*(D+\ep)}=0$, and so there exists a positive constant $C=C(\delta)$ such that $|A_t|\leq Ct^{Q-d-\ep}$ for all $t\in(0,\de]$. Hence,
	\[ \int_0^\de t^{-\ga-1}|A_t|\text{d}t\leq C\int_0^\de t^{Q-D-\ep-\ga-1}\text{d}t<\infty .\] \smallskip
	
	$(b)$ Case where $\ga=0$: If we assume that $\ga=0<Q-\Mink A$, then it suffices to show that $I:=\int_0^\de t^{-1}|A_t|\text{d}t<\infty$. Letting $D:=\Mink A$, we then have $D+\ep<Q$ for $\ep>0$ small enough; hence, since $\M^{*(D+\ep)}(A)=0$, there exists a positive constant $C$ such that $|A_t|\leq Ct^{Q-D-\ep}$ for all $t\in(0,\de)$. This immediately implies that $I\leq C\int_0^\de t^{Q-D-\ep-1}\text{d} t<\infty$. \smallskip
	
	$(c)$ Case where $\ga<0$: In this case, the left-hand side of our equation is clearly finite. We shall use Lemma 2.2 with $\alpha=-\ga$ and the Borel measurable function $f:\E\rightarrow[0,+\infty]$ given by 
	\[f(x):= \begin{cases} d(x,A), & \mbox{for } x\in A_\delta, \\
		0, & \mbox{for } x\in\E\setminus A_\delta . \end{cases}  \]
	Note that $\{f>t\}=\emptyset$ for $t\geq\de$, and $\{f>t\}=A_\de\setminus A_t$ for $0<t<\de$. Thus for $t<\de$, we have $|\{f>t\}|=|A_\de|-|A_t|$. We obtain
	\[\int_{A_\de} d(x,A)^\alpha \text{d}x=\alpha\int_0^\de t^{\alpha-1}(|A_\de|-|A_t|)\text{d}t=\de^\alpha|A_\de|-\alpha\int_0^\de t^{\alpha-1}|A_t|\text{d}t,\]
	which, after replacing $\alpha$ by $-\ga$, completes the proof.
	
\end{proof}

The proof of Lemma \ref{th: 2} just above bears a small but important difference to that in the Euclidean case. In particular, part (c) requires (for now) that the tubular neighborhood $A_t$ be closed in the Ahlfors regular setting, which is not the case in $\R^N$. This is due to a result in \cite{Sta} which states that $|A_t|=|\overline{A_t}|$ in the Euclidean setting, but which we conjecture to be false in the more general Ahlfors setting.

The following lemma (analog of \cite[Lemma 2.1.6]{LaRaZu}) complements the Harvey--Polking theorem, establishing the domain of integrability in the real setting: 

\begin{lemma}\label{lem: 4}
	Let $A$ be a bounded subset of $\E$ and $\de>0$. If $\ga>Q-\Mink A$, then $\int_{A_\de} d(x,A)^{-\ga}\text{d}x=+\infty$.
\end{lemma}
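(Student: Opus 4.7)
The plan is to produce a divergent lower bound for the integral by exploiting the fact that if $\gamma > Q - \overline{\dim}_B A$ then the Minkowski content of a suitable order must be infinite.

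First I would pick a real number $s$ with $Q - \gamma < s < \overline{\dim}_B A$; such an $s$ exists precisely because $\gamma > Q - \overline{\dim}_B A$. Note also that under the hypothesis we automatically have $\gamma > 0$, since $\overline{\dim}_B A \leq Q$ for any bounded subset of a $Q$-regular space (this follows from the Ahlfors upper bound $|A_t| \leq C t^Q$ for bounded $A$). By the characterization of $\overline{\dim}_B A$ as a supremum, $s < \overline{\dim}_B A$ implies $\mathcal{M}^{*s}(A) = +\infty$, i.e.
\[ \limsup_{t \to 0^+} \frac{|A_t|}{t^{Q-s}} = +\infty. \]
Hence I can extract a sequence $t_i \to 0^+$ (with $t_i \leq \delta$) along which $|A_{t_i}|/t_i^{Q-s} \to +\infty$.

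Next I would bound the integral from below by restricting to the smaller neighborhood $A_{t_i}$. Since $d(x,A) \leq t_i$ for every $x \in A_{t_i}$ and $\gamma > 0$, we have $d(x,A)^{-\gamma} \geq t_i^{-\gamma}$ pointwise on $A_{t_i}$, so
\[ \int_{A_\delta} d(x,A)^{-\gamma}\,\text{d}x \;\geq\; \int_{A_{t_i}} d(x,A)^{-\gamma}\,\text{d}x \;\geq\; t_i^{-\gamma}\,|A_{t_i}| \;=\; t_i^{Q - s - \gamma}\cdot \frac{|A_{t_i}|}{t_i^{Q-s}}. \]
Because $s > Q - \gamma$, the exponent $Q - s - \gamma$ is strictly negative, so $t_i^{Q-s-\gamma} \to +\infty$ as $t_i \to 0^+$; combined with $|A_{t_i}|/t_i^{Q-s} \to +\infty$, the right-hand side tends to $+\infty$. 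Since the left-hand side does not depend on $i$, this forces $\int_{A_\delta} d(x,A)^{-\gamma}\,\text{d}x = +\infty$.

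There is no real obstacle here; the only subtle point is the choice of the intermediate exponent $s$ and the verification that $\gamma > 0$ in the non-trivial regime, which I would handle in a single sentence at the start. This is the natural counterpart in Ahlfors regular spaces of the Euclidean argument in \cite{LaRaZu}, with the Euclidean volume exponent $N$ replaced throughout by the Ahlfors regularity dimension $Q$.
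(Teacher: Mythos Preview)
Your proof is correct and follows essentially the same route as the paper's: choose an intermediate exponent between $Q-\gamma$ and $\overline{\dim}_B A$, use that the corresponding upper Minkowski content is infinite to extract a sequence $t_i\to 0^+$ along which $|A_{t_i}|/t_i^{Q-s}\to\infty$, and bound the integral from below by $t_i^{-\gamma}|A_{t_i}|$. The only cosmetic difference is that the paper obtains the inequality $I_\delta\ge \delta^{-\gamma}|A_\delta|$ by first invoking the layer-cake identity of Lemma~\ref{th: 2} (case $\gamma>0$) and then the monotonicity of $\delta\mapsto I_\delta$, whereas you get the same bound directly from the pointwise estimate $d(x,A)^{-\gamma}\ge t_i^{-\gamma}$ on $A_{t_i}$; your version is marginally more self-contained. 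One small quibble: the justification ``$|A_t|\le C t^Q$'' for $\overline{\dim}_B A\le Q$ is not quite right as stated---what you actually use is that $|A_t|$ stays bounded as $t\to 0^+$ (since $A_t$ sits in a fixed ball), which forces $\mathcal{M}^{*r}(A)=0$ for every $r>Q$.
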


\begin{proof}
	For all $\ga>0$, 
	\[ I_\de:=\int_{A_\de} d(x,A)^{-\ga}\text{d}x=\de^{-\ga}|A_\de|+\ga\int_0^\de s^{-\ga-1}|A_s|\text{d}s\geq \de^{-\ga}|A_\de|,\]
	where the second equality holds from the same proof as in Lemma 2.3, case $(a)$.
	Let $D:=\Mink A$ and choose a positive $\sigma<D$ sufficiently close to $D$ so that $\ga>Q-\sigma$. Then, for all $\sigma$ in $(0,\delta)$, $\M^{*\sigma}(A)=+\infty$, which implies that there exists a sequence of positive numbers $s_k$ converging to zero such that
	\[ C_k:=\frac{|A_{s_k}|}{s_k^{Q-\sigma}}\rightarrow+\infty ~~\text{as}~~k\rightarrow+\infty .\]
	Since the map $\de\mapsto I_\de$ is nondecreasing on $(0,+\infty)$, we have for all $k$ large enough
	\[ I_\de\geq I_{s_k}\geq (s_k)^{-\ga}|A_{s_k}|=C_k\cdot s_k^{Q-\sigma-\ga}\rightarrow+\infty\]
	as $k\rightarrow\infty$. Hence, $I_\de=+\infty$.
\end{proof}

Given these lemmas, which deal with the soon defined distance zeta function restricted to real numbers, we can state our main theorem, which is an exact counterpart in our more general context of \cite[Theorem 2.1.11]{LaRaZu}. The proof is included for completeness, but is nearly identical to the Euclidean case.

\begin{theorem}\label{th: 5}
	Let $A$ be an arbitrary bounded subset of $\E$ and let $\de>0$. Then:
	\smallskip
	
	$(a)$ The $\textup{distance zeta function of}$ $A$ (with parameter $\delta$), denoted $\ze_A$ and defined, for all $s\in\mathbb{C}$ with $\text{Re}s$ sufficiently large, by 
	\[\ze_A(s)=\int_{A_\de} d(x,A)^{s-Q}\textnormal{d}x, \]
	is holomorphic in the open half-plane $\{\textnormal{Re} s>\Mink A\}$, and for all complex numbers $s$ in that region,
	\[\ze_{A_\de}'(s)=\int_{A_\de} d(x,A)^{s-Q}\log d(x,A)\textnormal{d}x .\]
	
	$(b)$ The lower bound of absolute convergence is optimal, i.e.
	\[ \Mink A=D(\ze_A),\]
	where $D(\ze_A)$ is the abscissa of absolute convergence of $\ze_A$; i.e., $\{\text{Re}s>\overline{\text{dim}}_B A\}$ is the largest open half-plane on which the Lebesgue integral $\int_{A_\delta}d(x,A)^{s-Q} \text{d}x$ is convergent (and thus also, absolutely convergent).
	
	$(c)$ If the Minkowski dimension $D:=\text{dim}_B A$ exists, $D<Q$, and $\M_*^D(A)>0$, then $\ze_A(s)\rightarrow+\infty$ as $s\in\R$ converges to $D$ from the right. In this case, \textup{the abscissa of holomorphic continuation} of $\ze_A$, denoted $D_{\text{hol}}(\ze_A)$, coincides with the abscissa of absolute convergence of $\ze_A$; i.e.,$\{\text{Re}s>\overline{\text{dim}}_B A\}$ is the largest open half-plane to which $\ze_A$ can be holomorphically continued and so, $D_{\text{hol}}(\ze_A)=\overline{\text{dim}}_B A$.
\end{theorem}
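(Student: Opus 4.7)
For part $(a)$, I would first establish absolute convergence of $\ze_A$ on the half-plane $\{\text{Re } s > \Mink A\}$: since $|d(x,A)^{s-Q}| = d(x,A)^{\text{Re } s - Q}$, taking $\ga := Q - \text{Re } s$ in the Harvey--Polking Lemma \ref{lem: 1} shows the defining integral converges absolutely on exactly that half-plane. Holomorphicity and the derivative formula then follow from the standard criterion for differentiation under the integral of a parameter-dependent integrand that is holomorphic in the parameter: for each compact $K \subset \{\text{Re } s > \Mink A\}$, setting $\sigma_0 := \min_{s \in K}\text{Re } s > \Mink A$, we have the uniform bound $|d(x,A)^{s-Q}| \leq d(x,A)^{\sigma_0 - Q}$ for $s \in K$, which is integrable on $A_\de$ by Lemma \ref{lem: 1}. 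Differentiating in $s$ introduces a factor $\log d(x,A)$, which can be absorbed via the elementary inequality $|\log t| \leq C_\ep(t^{-\ep} + t^{\ep})$ for any $\ep > 0$; since Harvey--Polking accommodates exponents slightly shifted from $\sigma_0 - Q$ (by choosing $\ep$ so that $\sigma_0 - \ep > \Mink A$), the dominant remains integrable and yields the stated formula for $\ze_A'(s)$.

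For part $(b)$, part $(a)$ already gives $D(\ze_A) \leq \Mink A$. For the reverse inequality, fix any real $s < \Mink A$ and set $\ga := Q - s > Q - \Mink A$; then Lemma \ref{lem: 4} yields $\int_{A_\de} d(x,A)^{s-Q}\,\textnormal{d}x = +\infty$. For a complex $s$ with $\text{Re } s < \Mink A$, the modulus of the integrand equals $d(x,A)^{\text{Re } s - Q}$, so the same divergence applies to the real number $\text{Re } s$. Absolute convergence therefore fails whenever $\text{Re } s < \Mink A$, giving $D(\ze_A) \geq \Mink A$ and hence equality.

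For part $(c)$, the idea is to rewrite $\ze_A$ along the positive real axis via Lemma \ref{th: 2} and exploit the positivity of the lower Minkowski content. For real $s > D$, taking $\ga = Q - s \in (-\infty, Q - D)$ in Lemma \ref{th: 2} gives
\[
\ze_A(s) = \de^{\,s-Q}|A_\de| + (Q-s)\int_0^\de t^{\,s-Q-1}|A_t|\,\textnormal{d}t.
\]
By definition of the lower Minkowski content, there exist $c > 0$ and $\de_0 \in (0, \de]$ such that $|A_t| \geq c\, t^{Q-D}$ for all $t \in (0, \de_0)$. Dropping the nonnegative boundary term and restricting the integral to $(0, \de_0)$ yields
\[
\ze_A(s) \geq c(Q-s)\int_0^{\de_0} t^{\,s-D-1}\,\textnormal{d}t = \frac{c(Q-s)\,\de_0^{\,s-D}}{s-D} \longrightarrow +\infty
\]
as $s \to D^+$ along the reals, since $Q - D > 0$. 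This blow-up precludes any holomorphic (hence continuous) extension of $\ze_A$ across $s = D$ along the real axis, so $D_{\textnormal{hol}}(\ze_A) \geq D$; combined with $D_{\textnormal{hol}}(\ze_A) \leq D(\ze_A) = \Mink A$ from $(b)$, we conclude $D_{\textnormal{hol}}(\ze_A) = \Mink A$.

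The main technical obstacle is the holomorphicity argument in $(a)$: choosing a dominating function that simultaneously absorbs the logarithmic factor from differentiating in $s$, remains uniform over a compact complex parameter neighborhood, and still lies within the exponent range permitted by Harvey--Polking. Once this is arranged, the remaining parts reduce rather cleanly to the three real-variable lemmas already established.
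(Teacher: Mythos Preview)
Your proof is correct. Parts $(b)$ and $(c)$ match the paper's argument almost verbatim: both reduce $(b)$ to Lemma~\ref{lem: 4}, and both handle $(c)$ by rewriting $\ze_A(s)$ via Lemma~\ref{th: 2} and bounding $|A_t|$ below by $c\,t^{Q-D}$ from the positivity of $\M_*^D(A)$. Your restriction to a subinterval $(0,\de_0)$ is slightly cleaner than the paper's claim that the lower bound holds on the full interval $(0,\de)$, but the substance is identical.

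The one genuine difference is in part $(a)$. The paper proves holomorphicity by hand: it forms the difference quotient $R(h)=\frac{\ze_A(s+h)-\ze_A(s)}{h}-I(s)$, expands $(d^h-1)/h-\log d$ via the MacLaurin series for $e^z$, obtains the pointwise bound $|f(h)|\leq \tfrac12|h|(\log d)^2 d^{-|h|}$, and then absorbs the $(\log d)^2$ factor using $|\log\rho|^2\rho^\ep\leq C$ before invoking Harvey--Polking with the shifted exponent. You instead appeal directly to the standard theorem on integrals depending holomorphically on a parameter, supplying the uniform majorant $d(x,A)^{\sigma_0-Q}$ on compact parameter sets and handling the logarithmic factor in the derivative via $|\log t|\leq C_\ep(t^{-\ep}+t^\ep)$. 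Both routes ultimately rest on the same mechanism---trading a small amount of the exponent for control of the logarithm---but yours outsources the complex-analytic bookkeeping to a black-box theorem, while the paper's argument is self-contained. The paper in fact remarks, immediately after the proof, that your approach is a valid alternative (``part $(a)$ of Theorem~\ref{th: 5} could be established by applying the well-known theorem concerning integrals depending holomorphically on a complex parameter'').
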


\begin{proof}
	$(a)$ Let $I(s):=\int_{A_\de} d(x,A)^{s-Q}\log d(x,A)\text{d}x$. To prove the holomorphicity of $\ze_A$ and the corresponding statement about the complex derivative of $\ze_A$, let us fix any $s \in \mathbb{C}$ such that $\text{Re} s>\Mink A$. We thus have to show that 
	\begin{align*}
		R(h) &:= \frac{\ze_A(s+h)-\ze_A(s)}{h}-I(s)\\
		&= \int_{A_\de}\Big(\frac{d(x,A)^h-1}{h}-\log d(x,A)\Big)d(x,A)^{s-Q}\text{d}x
	\end{align*}
	converges to zero as $h\rightarrow 0$ in $\C$, with $h\neq 0$.
	
	Letting $d:=d(x,A)\in(0,\de)$, we first consider
	\[f(h):=\frac{d^h-1}{h}-\log d= \frac{1}{h}(e^{h\log d}-1)-\log d.\]
	Using the MacLaurin series $e^z=\sum_{j\geq 0}\frac{z^j}{j!}$, which converges for all $z\in\C$, we obtain
	\[f(h)=h(\log d)^2\sum_{k=0}^\infty\frac{1}{(k+2)(k+1)}\cdot \frac{(\log d)^k h^k}{k!},\]
	for all $h\in\C$. Further, assuming without loss of generality that $\de\in(0,1]$, and hence $\log d\leq 0$, we have
	\begin{align*}
		|f(h)| &\leq \frac{1}{2}|h|(\log d)^2\sum_{k=0}^\infty\frac{(|\log d| |h|)^k}{k!} \\
		&= \frac{1}{2}|h|(\log d)^2 e^{-|h|\log d} \\
		&= \frac{1}{2} |h|(\log d)^2 d^{-|h|} .
	\end{align*}
	Thus,
	\[|R(h)|\leq \frac{1}{2}|h|\int_{A_\de}|\log d(x,A)|^2 d(x,A)^{\text{Re} s-Q-|h|}\text{d}x.\]
	Let $\ep>0$ be sufficiently small, to be specified. Taking $h\in\C$ such that $|h|<\ep$, since $\de\leq 1$, and hence $d(x,A)\leq 1$, we have
	\[|R(h)|\leq \frac{1}{2}|h|\int_{A_\de}|\log d(x,A)|^2 d(x,A)^\ep d(x,A)^{\text{Re} s-Q-2\ep}\text{d}x.\]
	By a simple application of L'H\^opital's Rule, it is clear that there exists a positive constant $C=C(\de,\ep)$ such that $|\log \rho|^2 \rho^\ep\leq C$ for all $\rho\in(0,\de)$. This implies that
	\[|R(h)|\leq \frac{1}{2} C|h|\int_{A_\de} d(x,A)^{\text{Re} s-Q-2\ep}\text{d}x.\]
	Letting $\ga:=2\ep+Q-\text{Re} s$, we see that the integrability condition $\ga<Q-\Mink A$ is equivalent to $\text{Re} s>\Mink A+2\ep$. Since $s$ is fixed with $\text{Re} s>\Mink A$, we only need specify that $\ep$ be small enough to fulfill this inequality. Then $R(h)\rightarrow 0$ as $h\rightarrow 0$ in $\C$, with $h\neq 0$. Therefore we conclude that $\ze_A(s)$ is holomorphic for $\text{Re} s>\Mink A$, with derivative $\ze_A' (s)$ given as desired.
	
	\smallskip
	$(b)$ This follows immediately from part $(a)$ and Lemma \ref{lem: 4}.
	
	\smallskip
	
	$(c)$ Since $\M_*^D(A)>0$, then for any $\de>0$ there exists $C>0$ such that for all $t\in (0,\de)$, we have $|A_t|\geq C t^{Q-D}$. Using Lemmas 2.1 and 2.2, we see that for any $\ga\in (0,Q-D)$,
	\begin{align*}
		\infty &> I(\ga)=\int_{A_\de} d(x,A)^{-\ga}\text{d}x =\de^{-\ga}|A_\de|+\ga\int_0^\de t^{-\ga-1}|A_t|\text{d}t \\
		&\geq \ga C\int_0^\de t^{Q-D-\ga-1}\text{d}t = \ga C \frac{\de^{Q-D-\ga}}{Q-D-\ga} .
	\end{align*}
	Therefore, if $\ga\rightarrow Q-D$ from the left, then $I(\ga)\rightarrow +\infty$. Equivalently, if $s\in\R$ is such that $s\rightarrow D$ from the right, then $\ze_A(s)\rightarrow +\infty$.
	
\end{proof}

We point out that, alternatively, part $(a)$ of Theorem \ref{th: 5} could be established by applying the well-known theorem concerning integrals depending holomorphically on a complex parameter, much as in the second proof given in \cite{LaRaZu}.

As noted in the lemmas and  is clear from the definition, if $s$ is real then $\ze_A$ is also real-valued. Futhermore, using the principle of reflection, we have that for any complex number such that $\text{Re } s>\Mink A$,  $\overline{\ze_A(s)}=\ze_A(\overline{s})$. We will be mostly concerned with meromorphically extending the distance zeta function to suitable domains, when possible, and below we will define the {\em visible complex dimensions of the set A} as the poles in such domains of these extensions. Thus, if the distance zeta function can be meromorphically extended to a region symmetric with respect to the real axis, the nonreal (visible) complex dimensions come in conjugate pairs.

Given that our bounded subset $A$ is typically not of ``full dimension'' in the sense that $\Mink A<Q$, we have the following proposition (analog of \cite[Proposition 2.1.13]{LaRaZu}):

\begin{prop}
	Assuming that $|\overline A|=0$ (which is always the case if $\Mink A<Q$), and given any $\de >0$, we can compute the distance zeta function $\ze_A$ as follows for all $s\in\C$ with $\textnormal{Re} s>\Mink A$:
	
	\[ \ze_A(s)=\lim_{\ep\rightarrow 0^+} \int_{A_\de\backslash A_\ep} d(x,A)^{s-Q} \textnormal{d}x. \]
	
\end{prop}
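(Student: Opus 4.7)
The plan is to obtain this from the Lebesgue dominated convergence theorem, with the required integrable majorant supplied by part $(a)$ of Theorem \ref{th: 5}. First I would rewrite the domain of integration: since tubular neighborhoods are closed in this setting,
\[ A_\de \setminus A_\ep = \{x\in\E : \ep < d(x,A)\leq\de\}, \]
and as $\ep\downarrow 0$ these sets increase to $A_\de\setminus\overline{A}$, because $d(x,A)=0$ exactly when $x\in\overline{A}$. Under the hypothesis $|\overline{A}|=0$, the sets $A_\de\setminus A_\ep$ therefore exhaust $A_\de$ up to a null set, so the indicator functions $\mathbf{1}_{A_\de\setminus A_\ep}$ converge to $\mathbf{1}_{A_\de}$ almost everywhere (in fact, everywhere off of $\overline{A}$).

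Fixing any $s\in\C$ with $\text{Re } s>\Mink A$, I would next observe that $|d(x,A)^{s-Q}|=d(x,A)^{\text{Re } s-Q}$, and that part $(a)$ of Theorem \ref{th: 5} -- or, equivalently, Lemma \ref{lem: 1} applied with $\ga:=Q-\text{Re } s$, which indeed satisfies $\ga<Q-\Mink A$ -- guarantees
\[ \int_{A_\de} d(x,A)^{\text{Re } s - Q}\,\text{d}x < \infty. \]
This provides an $\ep$-independent integrable majorant for the entire family of integrands $\mathbf{1}_{A_\de\setminus A_\ep}(x)\,d(x,A)^{s-Q}$.

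With pointwise almost-everywhere convergence of the integrands and a dominating function in hand, dominated convergence immediately yields
\[ \lim_{\ep\to 0^+} \int_{A_\de\setminus A_\ep} d(x,A)^{s-Q}\,\text{d}x = \int_{A_\de\setminus\overline{A}} d(x,A)^{s-Q}\,\text{d}x = \int_{A_\de} d(x,A)^{s-Q}\,\text{d}x = \ze_A(s), \]
where the middle equality again uses $|\overline{A}|=0$. As for the parenthetical assertion that $\Mink A<Q$ automatically implies $|\overline{A}|=0$, this was already noted in the proof of Lemma \ref{th: 2}: if $\Mink A<Q$, then $\M^{*(\Mink A+\ep)}(A)=0$ for $\ep>0$ small, so $|A_t|\leq C t^{Q-\Mink A-\ep}\to 0$ as $t\to 0^+$, and since $\overline{A}=\bigcap_{t>0}A_t$, continuity of the measure forces $|\overline{A}|=0$. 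There is no genuine obstacle here; the entire proposition is a short exhaustion argument, made possible precisely because the absolute-integrability estimate from Theorem \ref{th: 5}$(a)$ is already available on the full neighborhood $A_\de$.
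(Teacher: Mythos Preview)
Your argument is correct. Both you and the paper begin from the same observation---that $\chi_{A_\de\setminus A_\ep}\to\chi_{A_\de}$ a.e.\ because $|\overline{A}|=0$---but you finish differently. You invoke dominated convergence, using the single majorant $d(x,A)^{\text{Re}\,s-Q}\in L^1(A_\de)$ furnished by Lemma~\ref{lem: 1}; this is the cleanest route to the proposition as stated. The paper instead proves a quantitative tail estimate: for any $\xi>\Mink A$ and any $d\in(\Mink A,\xi)$ it shows
\[
\sup_{\text{Re}\,s>\xi}\Big|\int_{A_\ep}d(x,A)^{s-Q}\,\textnormal{d}x\Big|\leq C_1\,\ep^{\xi-d}\to 0,
\]
by combining the bound $|A_t|\leq Ct^{Q-d}$ with the identity from Lemma~\ref{th: 2}. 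That extra work buys \emph{uniform} convergence on half-planes $\{\text{Re}\,s>\xi\}$, which is precisely the content of the subsequent proposition on the asymptotic behaviour of $\ze_A(\cdot,A_\ep)$. Your DCT argument yields only pointwise convergence in $s$, which is all the present statement requires, but would not by itself give that follow-up result.
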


\begin{proof}
	The characteristic function $\chi_{A_\de\setminus A_\ep}$ converges to $\chi_{A_\de}$ a.e. in $A_\de$ as $\ep\rightarrow 0^+$. To show that our limit converges uniformly, we take a complex number $s$ and real number $\xi$ such that $\text{Re} s > \xi >\Mink A$, and assume without loss of generality that $\ep\in (0,1)$, noting that 
	\[ \Bigg{|}\int_{A_\ep} d(x,A)^{s-Q} \text{d}x\Bigg{|} \leq \int_{A_\ep} d(x,A)^{\text{Re} s-Q} \text{d}x \leq \int_{A_\ep} d(x,A)^{\xi-Q} \text{d}x .\]
	Let us choose any $d\in (\Mink A, \xi)$. Since $d>\Mink A$, we have that $\M^{*d}(A)=0$, which means there exists a positive constant $C=C(d,Q,A)$ such that $|A_t|\leq Ct^{Q-d}$ for all $t\in (0,\ep].$ Using the (extended) Harvey-Polking lemma (Lemma \ref{lem: 1})with $\ga := Q-\xi$, it follows that
	\begin{align*}
		\int_{A_\ep} d(x,A)^{\xi-Q} \text{d}x &= \ep^{-\ga}|A_\ep|+\ga\int_0^\ep t^{\ga-1}|A_t|\text{d}t \\
		&\leq \ep^{\ga}C\ep^{Q-d}+\ga\int_0^\ep t^{\ga-1}Ct^{Q-d} \text{d} t = C_1\ep^{\xi-d},
	\end{align*}
	where $C_1 := C(n-d)/(\xi-d)$. Hence, using $d<\xi$, we conclude that
	\[ \sup_{\text{Re}s>\xi}\Big{|}\int_{A_\ep}d(x,A)^{s-Q}\text{d}x\Big{|}\leq C_1e^{\xi-d}\rightarrow 0^+~~~\text{as }\ep\rightarrow 0^+ ,\]
	and our result follows.
\end{proof}

From this proof, we can deduce that the distance zeta function satisfies the following asymptotic property (analog of \cite[Proposition 2.1.14]{LaRaZu}).

\begin{prop}
	Assume that $A$ is a bounded subset of $\E$, $\ep\in (0,1)$, and let us define the fixed-epsilon distance zeta function $\ze_A(\cdot,A_\ep)$ by
	\[ \ze_A(s,A_\ep):= \int_{A_\ep} d(x,A)^{s-Q}\text{d}x.\]
	Then, for any $\xi>\Mink A$ and $d\in(\Mink A,\xi),$ there exists a positive constant $C_1=C_1(\xi,d,Q,A)$ such that 
	\[ \sup_{\text{Re}s>\xi}|\ze_A(s,A_\ep)|\leq C_1\ep^{\xi-d},~~~\text{for all } \ep\in (0,1).\]
	In other words, $\sup_{\text{Re}s>\xi}|\ze_A(s,A+\ep)|=O(\ep^{\xi-d})$ as $\ep\rightarrow 0^+.$ 
\end{prop}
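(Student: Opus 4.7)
The plan is to extract and slightly reorganize the bound already derived in the proof of the preceding proposition. The key observation is that for any $s\in\C$ and any $x\in A_\ep$, one has $|d(x,A)^{s-Q}|=d(x,A)^{\text{Re }s-Q}$, so the modulus of the integrand is real-valued and the supremum over the half-plane $\{\text{Re }s>\xi\}$ can be controlled by replacing $s$ with the real number $\xi$.

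First I would apply the triangle inequality to obtain $|\ze_A(s,A_\ep)|\leq\int_{A_\ep}d(x,A)^{\text{Re }s-Q}\text{d}x$. Since $\ep\in(0,1)$ and $d(x,A)\leq\ep\leq 1$ on $A_\ep$, and since $\text{Re }s>\xi$ forces $\text{Re }s-Q>\xi-Q$, the inequality $d(x,A)^{\text{Re }s-Q}\leq d(x,A)^{\xi-Q}$ holds pointwise on $A_\ep$. Thus, uniformly in $s$ with $\text{Re }s>\xi$, we have $|\ze_A(s,A_\ep)|\leq\int_{A_\ep}d(x,A)^{\xi-Q}\text{d}x$.

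Next I would apply Lemma \ref{th: 2} with $\ga:=Q-\xi$, which lies in $(-\infty,Q-\Mink A)$ since $\xi>\Mink A$. This yields the identity $\int_{A_\ep}d(x,A)^{-\ga}\text{d}x=\ep^{-\ga}|A_\ep|+\ga\int_0^\ep t^{-\ga-1}|A_t|\text{d}t$. Fixing $d\in(\Mink A,\xi)$, the condition $d>\Mink A$ gives $\M^{*d}(A)=0$, so there exists $C=C(d,Q,A)>0$ with $|A_t|\leq Ct^{Q-d}$ for all $t\in(0,1]$. Substituting this into both terms, and noting that the exponent $Q-d-\ga-1=\xi-d-1$ is strictly greater than $-1$ so that the integral converges, an elementary calculation produces a bound of the form $C_1\ep^{\xi-d}$ for a constant $C_1=C_1(\xi,d,Q,A)$ depending only on the indicated parameters.

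The step to watch is the uniformity in $s$: the only $s$-dependence enters through the exponent $\text{Re }s-Q$, and this is dominated by $\xi-Q$ only because $d(x,A)\leq 1$ on $A_\ep$, which relies crucially on the restriction $\ep\in(0,1)$. Apart from this, the argument is essentially the terminal estimate in the proof of the previous proposition, specialized to integration over $A_\ep$ rather than $A_\de$, and no new machinery beyond Lemmas \ref{lem: 1} and \ref{th: 2} is required. The stated asymptotic $\sup_{\text{Re }s>\xi}|\ze_A(s,A_\ep)|=O(\ep^{\xi-d})$ as $\ep\to 0^+$ then follows immediately.
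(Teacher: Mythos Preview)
Your proposal is correct and follows essentially the same approach as the paper, which simply states that this proposition is deduced from the proof of the preceding one. Your reconstruction of that argument---bounding $|d(x,A)^{s-Q}|$ by $d(x,A)^{\xi-Q}$ via $d(x,A)\leq\ep<1$, then invoking Lemma~\ref{th: 2} together with the estimate $|A_t|\leq Ct^{Q-d}$---matches the paper's computation line for line, and your remark that the constant $C$ must be chosen uniformly for $t\in(0,1]$ (rather than merely $t\in(0,\ep]$) is exactly the point needed to ensure $C_1$ is independent of $\ep$.
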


The following theorem, counterpart of \cite[Lemma 2.1.15]{LaRaZu}, establishes that the distance zeta function is an entire function away from the set $A$. Note that since the zeta function is associated with the ordered pair $(A,U)$, where $A$ and $U$ are suitable subsets of $\E$, we could establish this theorem in the manner of {\em general relative fractal drums}. These are studied further at the end of the section, as introduced in \cite{LaRaZu} and extended to the present setting.

\begin{theorem}
	\label{th: 8}
	Let $A$ and $U$ be bounded subsets of $\E$ which have disjoint closures, i.e., $\overline{A}\cap\overline{U}=\emptyset.$ Further assume that $U$ is Hausdorff measurable, and hence, that $U$ is a Borel subset of $\E$; see, e.g., \cite[Theorem 6.14, p. 207]{LaRa1}. Then
	\[ F:\C\rightarrow\C,~~~F(s):=\int_U d(x,A)^{s-Q}\text{d}x. \]
	is an entire function and we have
	\[ F'(s)=\int_U d(x,A)^{s-Q} \log d(x,A)\text{d}x, \]
	for all $s\in\C .$
\end{theorem}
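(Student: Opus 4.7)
The approach is to reduce the theorem to an elementary convergence argument by first establishing uniform upper and lower bounds on $d(x,A)$ for $x\in U$. The hypothesis $\overline{A}\cap\overline{U}=\emptyset$, combined with the boundedness of both sets in the Ahlfors space $\E$, yields constants $0<m\leq d(x,A)\leq M<\infty$ valid for every $x\in U$: the upper bound is immediate from $\textnormal{diam}(A\cup U)<\infty$, while the lower bound follows from a standard compactness argument, since the closure $\overline U$ is compact in the Ahlfors setting (bounded sets in a complete doubling metric space are precompact) and the continuous function $x\mapsto d(x,\overline A)$ is strictly positive on the compact set $\overline U$, hence attains a positive minimum $m$. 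Moreover $|U|<\infty$ by boundedness. These two facts already show $F(s)$ is well defined on all of $\C$, since $|d(x,A)^{s-Q}|\leq \max(m^{\textnormal{Re}\,s-Q},M^{\textnormal{Re}\,s-Q})$ uniformly on $U$.

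To prove entireness together with the derivative formula, I would run the $R(h)$-argument from the proof of Theorem~\ref{th: 5}(a) with substantial simplification. Fix $s\in\C$; the Maclaurin expansion of $e^{h\log d(x,A)}$ yields the pointwise estimate
\[ \Bigl|\tfrac{d(x,A)^h-1}{h}-\log d(x,A)\Bigr| \leq \tfrac{1}{2}|h|(\log d(x,A))^2 e^{|h|\,|\log d(x,A)|}, \]
which, because $d(x,A)\in[m,M]$ on $U$ and $|h|$ stays small, is bounded by $C|h|$ for some constant $C=C(m,M,|h|)$, uniformly in $x\in U$. Multiplying by $|d(x,A)^{s-Q}|\leq C'=C'(s,m,M)$ and integrating over $U$ (which has finite measure) gives
\[ \Bigl|\tfrac{F(s+h)-F(s)}{h} - \int_U d(x,A)^{s-Q}\log d(x,A)\,\textnormal{d}x\Bigr| \leq C\,C'\,|h|\,|U| \;\longrightarrow\; 0 \quad\text{as } h\to 0. \]
This simultaneously proves that $F$ is complex differentiable at $s$ with the asserted derivative formula, and since $s\in\C$ was arbitrary, $F$ is entire.

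The only delicate step is securing the uniform positive distance $m>0$; in $\R^N$ this is immediate from Heine--Borel, and in the Ahlfors setting it relies on the doubling/completeness properties of $\E$. Once that is in place, the argument is conceptually easier than that of Theorem~\ref{th: 5}(a): no blow-up issue arises near $A$, since $U$ sits uniformly away from $A$. A very short alternative would apply Morera's theorem directly --- noting that $s\mapsto d(x,A)^{s-Q}$ is entire for each fixed $x\in U$ and using Fubini (justified by the same uniform bound) to swap $\oint_T$ with $\int_U$, concluding $\oint_T F(s)\,\textnormal{d}s=0$ for every triangle $T\subset\C$.
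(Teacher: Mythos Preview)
Your proposal is correct and follows essentially the same route as the paper: both arguments obtain uniform bounds $0<d_1\le d(x,A)\le d_2$ on $U$, then run the $R(h)$ difference-quotient estimate from Theorem~\ref{th: 5}(a), observe that the integrand is bounded by a constant $C$ uniformly for $|h|\le\varepsilon$, and conclude $|R(h)|\le \tfrac{1}{2}C|h|\,|U|\to 0$. You supply a bit more care than the paper in justifying the positive lower bound $m$ via compactness of $\overline{U}$ (the paper simply asserts ``since $A$ and $U$ are disjoint and bounded, there exist positive constants $d_1,d_2$\dots''), and you sketch a Morera-type alternative at the end, but the core argument is identical.
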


\begin{proof}
	Let $s$ be a fixed complex number and set $R(h)=\frac{1}{h}(F(s+h)-F(s))-I_1(h)$, for $h\in\C,~h\neq\emptyset,$ where $I_1:=\int_U d(x,A)^{s-Q} \log d(x,A)\text{d}x.$ We follow the same procedure as in the proof of Theorem \ref{th: 5}, part (a), where it follows that
	\[ |R(h)|\leq \frac{1}{2} |h| \int_U |\log d(x,A)|^2 \exp(|\log d(x,A)| |h|) d(x,A)^{\text{Re} s-Q}\text{d}x. \]
	Since $A$ and $U$ are disjoint and bounded, there exists positive constants $d_1$ and $d_2$ such that $d_1\leq d(x,A)\leq d_2$ for all $x\in U$. Therefore, the function under the integral sign above is bounded from above by a constant $C$, uniformly for all $h\in\C$ such that  $|h|\leq\ep$, where $\ep >0$ is fixed:
	\begin{align*}
		C=&\max\{(\log d_1)^2,(\log d_2)^2\}\exp(\max\{|\log d_1|,|\log d_2|\}\ep) \\
		&\cdot \max\{d_1^{\text{Re}s-Q},d_2^{\text{Re}s-Q}\}.
	\end{align*}
	Hence, $|R(h)|\leq \frac{1}{2} |h| C |U|$, and therefore $R(h)\rightarrow 0$ as $h\rightarrow 0$ in $\C$, with $h\neq 0$.
\end{proof}

We also wish to generalize Lemma \ref{th: 2} for the distance zeta function in full. The established identity is used below to define the {\em tube zeta function of A}, which presents an alternate zeta function for analysis. This result is the counterpart to \cite[Theorem 2.2.1]{LaRaZu}. We will see that, in certain examples, the tube zeta function is much easier to use compared to the distance zeta function.

\begin{theorem}\label{funct}
	Let $A$ be any bounded subset of an Ahlfors regular space $\E$. Then, for every value of the parameter $s\in\C$ such that $\textnormal{Re } s>\Mink A$, the following identity holds:
	\begin{equation}\label{eq: 1}
		\int_{A_\delta}d(x,A)^{s-Q}\text{d}x=\de^{s-Q}|A_\delta|+(Q-s)\int_0^\delta t^{s-Q-1}|A_t|\text{d}t.
	\end{equation}
\end{theorem}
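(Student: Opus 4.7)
My plan is to obtain the identity \eqref{eq: 1} as a straightforward analytic continuation of the real-parameter identity already established in Lemma \ref{th: 2}. The key observation is that, after the substitution $\gamma = Q - s$, Lemma \ref{th: 2} (combining cases $(a)$--$(c)$) gives exactly the identity \eqref{eq: 1} for all real $s$ in the interval $(\Mink A, +\infty)$: indeed, $\gamma \in (-\infty, Q - \Mink A)$ is equivalent to $s \in (\Mink A, +\infty)$, and the right-hand side transforms as required. So the theorem will be a consequence of the identity principle for holomorphic functions, applied to the half-plane $\{\textnormal{Re }s>\Mink A\}$.

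\textbf{Step 1: Holomorphicity of the left-hand side.} This is immediate from Theorem \ref{th: 5}$(a)$: the distance zeta function $\ze_A(s) = \int_{A_\delta} d(x,A)^{s-Q}\textnormal{d}x$ is holomorphic on $\{\textnormal{Re }s>\Mink A\}$.

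\textbf{Step 2: Holomorphicity of the right-hand side.} The first summand $\de^{s-Q}|A_\de| = |A_\de|\, e^{(s-Q)\log\de}$ is an entire function of $s$. For the second summand, fix any $\xi > \Mink A$ and choose $d \in (\Mink A, \xi)$. Since $\M^{*d}(A)=0$, there is $C=C(\de)>0$ with $|A_t|\leq C t^{Q-d}$ on $(0,\de]$. For every $s \in \C$ with $\textnormal{Re }s \geq \xi$,
\[ |t^{s-Q-1}|A_t|| \leq C\, t^{\textnormal{Re }s - d - 1} \leq C\, t^{\xi - d - 1}, \]
and the right-hand bound is integrable on $(0,\de)$. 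Hence the integrand depends holomorphically on $s$ for each fixed $t$, and is dominated locally uniformly by an integrable function. By the standard theorem on integrals depending holomorphically on a complex parameter (the same tool mentioned after Theorem \ref{th: 5}), the map $s \mapsto \int_0^\de t^{s-Q-1}|A_t|\textnormal{d}t$ is holomorphic on $\{\textnormal{Re }s>\xi\}$; since $\xi > \Mink A$ was arbitrary, it is holomorphic on the whole half-plane $\{\textnormal{Re }s>\Mink A\}$. Multiplying by the entire factor $(Q-s)$ preserves holomorphicity.

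\textbf{Step 3: Agreement on the real axis and conclusion.} By Lemma \ref{th: 2} with $\gamma := Q - s$, the identity \eqref{eq: 1} holds for every real $s \in (\Mink A, +\infty)$ (the three cases $s < Q$, $s = Q$, $s > Q$ correspond respectively to cases $(a)$, $(b)$, $(c)$ of Lemma \ref{th: 2}, and each integral is finite there). The real ray $(\Mink A, +\infty)$ clearly has a limit point in the connected open half-plane $\{\textnormal{Re }s>\Mink A\}$, so by the identity principle the two holomorphic functions must agree throughout, which is precisely \eqref{eq: 1}.

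\textbf{Main obstacle.} The only nontrivial point is Step 2, namely verifying holomorphicity of the auxiliary integral $\int_0^\de t^{s-Q-1}|A_t|\textnormal{d}t$; this is handled exactly by the Minkowski-content estimate used in the proof of Lemma \ref{th: 2}$(a)$. Everything else is a direct application of the identity principle, which is why the theorem is essentially the complexification of Lemma \ref{th: 2}.
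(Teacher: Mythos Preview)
Your proposal is correct and follows essentially the same approach as the paper: both arguments invoke Lemma \ref{th: 2} for the real-parameter identity, establish holomorphicity of the left-hand side via Theorem \ref{th: 5}$(a)$, verify holomorphicity of the right-hand side using the Minkowski-content bound $|A_t|\leq C t^{Q-d}$ together with the standard theorem on integrals depending holomorphically on a parameter, and then conclude by analytic continuation. The only cosmetic difference is that the paper frames $g_1(s)=\int_0^\delta t^{s-Q-1}|A_t|\,\textnormal{d}t$ as a Dirichlet-type integral in the sense of \cite[Appendix A]{LaRaZu}, but the underlying estimate is identical to your Step 2.
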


\begin{proof}
	Let $D:=\Mink A$. By Lemma \ref{th: 2}, we know that this identity already holds for all $s\in\R$ such that $s>D.$ Let us denote the left-hand side of Equation (\ref{eq: 1}) by $f(s)$ and the right-hand side by $g(s)$. Since $f(s)=g(s)$ on the subset $(D, +\infty)\subset\C$ (which has an accumulation point in $\C$), to prove the theorem, it suffices to show that $f(s)$ and $g(s)$ are both holomorphic in the region $\{\text{Re }s>D\}$; the result will then follow by the principle of analytic continuation. Note that the holomorphicity of $f(s)$ in this region was already proven in part $(a)$ of Theorem \ref{th: 5}.
	
	To prove the holomorphicitiy of $g(s)$ on $\{\text{Re }s>D\}$, it suffices to consider $g_1(s)=\int_0^\de t^{s-Q-1}|A_t|\text{d}t$. Note that $g_1(s)$ has the form of a Dirichlet-type integral, in the sense of \cite[Appendix A]{LaRaZu}: $g_1(s)=\int_E\phi(t)^s\text{d}\mu (t),$ where $E:=(0,\de)$, $\phi(t):=t,$ and the positive measure $\mu$ is given by $\text{d}\mu(t):=t^{-Q-1}|A_t|\text{d}t.$ Thus it suffices to show that on this region, $g_1(s)$ is well defined. To prove this, let $\ep>0$ be small enough so that $\text{Re }s>D+\ep$. Since $\M^{*(D+\ep)}(A)=0,$ there exists $C_\de>0$ such that $|A_t|\leq C_\de t^{Q-D-\ep}$ for all $t\in (0,\de].$ Then
	\begin{align*}
		|g_1(s)|&\leq\int_0^\de t^{\text{Re }s-Q-1}|A_t|\text{d}t \\
		&\leq C_\de\int_0^\de t^{\text{Re }s-D-\ep-1} \text{d}t=C_\de\dfrac{\de^{\text{Re }s-D-\ep}}{\text{Re }s-D-\ep}<\infty.
	\end{align*}
	For fixed $s_0\in\C$, with $\text{Re}s_0>D$ and for $\eta>0$ sufficiently small, it follows that $g_1(s)$ is Lebesgue integrable for $|\hspace{0.1em}\text{Re}(s-s_0)|<\eta$. One then deduces from the classic theorem about an integral depending holomorphically on a parameter that $g$ is holomorphic at $s_0$ and hence also, on the open half-plane $\{\text{Re}s>D\}$.
\end{proof}

\begin{definition}\label{tube}
	Let $\de$ be a fixed positive number, and let $A$ be a bounded subset of $\E$. Then the {\it tube zeta function of $A$}, denoted by $\zet_A$, is defined by
	\[ \zet_A(s)=\int_0^\de t^{s-Q-1}|A_t|\text{d}t,\]
	for $s\in\C$ with $\text{Re }s$ sufficiently large.
\end{definition}

We also denote the {\em abscissa of holomorphic continuation of the function f} as $D(f)$ for the sequel. Importantly, we know that if a bounded set $A$ is such that $D:=\text{dim}_B A$ exists, $D<Q$, and $\M_*^D(A)>0$, then $D(\ze_A)=\Mink A$; see Theorem \ref{th: 5}, part (c). The following lemma (analog of \cite[Lemma 2.1.52]{LaRaZu}) proves that $D(\ze_A) \geq 0$ for any bounded subset $A$ of $\E$, which is not the case for relative fractal drums. In particular, there are examples of relative fractal drums in $\R^N$ that have negative abscissa of convergence, see \cite[Subsection 4.1.2]{LaRaZu}.

\begin{lemma}
	For any bounded subset $A$ of $\E$, we have $D(\ze_A)\geq 0.$
\end{lemma}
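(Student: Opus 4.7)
The plan is to exhibit divergence of $\int_{A_\delta} d(x,A)^{s-Q}\,\mathrm{d}x$ for every real $s<0$. Since $|d(x,A)^{s-Q}|=d(x,A)^{\text{Re } s-Q}$, absolute convergence of $\zeta_A$ at a complex point depends only on $\text{Re } s$, so divergence along the negative real axis immediately yields $D(\zeta_A)\geq 0$. I may assume that $A$ is nonempty, for otherwise $A_\delta=\emptyset$ and the statement is vacuous.

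The crux is a direct lower bound obtained by restricting the integral to a small ball around a chosen point of $A$. Fix any $x_0\in A$ and pick $t\in(0,\min(\delta,\text{diam}\,\mathcal{E})]$. For every $x\in B(x_0,t)$, the triangle inequality gives $d(x,A)\leq d(x,x_0)\leq t$; combined with $s-Q<0$ (valid since $s<0\leq Q$), this yields the pointwise lower bound $d(x,A)^{s-Q}\geq t^{s-Q}$. Because $B(x_0,t)\subseteq A_t\subseteq A_\delta$, and because Ahlfors $Q$-regularity gives $|B(x_0,t)|\geq K^{-1}t^Q$, we obtain
\[
\int_{A_\delta} d(x,A)^{s-Q}\,\mathrm{d}x \;\geq\; \int_{B(x_0,t)} d(x,A)^{s-Q}\,\mathrm{d}x \;\geq\; t^{s-Q}\,|B(x_0,t)| \;\geq\; K^{-1}\, t^{s}.
\]

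Letting $t\to 0^+$, the right-hand side tends to $+\infty$ since $s<0$, which forces $\zeta_A(s)=+\infty$. Thus no real $s<0$ lies in the half-plane of absolute convergence of $\zeta_A$, so $D(\zeta_A)\geq 0$. There is essentially no obstacle; one only needs to respect the condition $t\leq\text{diam}\,\mathcal{E}$ required to invoke Ahlfors regularity, but this is automatic in the limit $t\to 0^+$. Note that this argument is cleaner than invoking Theorem \ref{th: 5}(b) together with the non-negativity of $\overline{\text{dim}}_B A$, since it avoids the hypothesis $\M_*^D(A)>0$ used there.
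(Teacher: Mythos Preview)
Your proof is correct and shares the paper's key ingredient---the Ahlfors lower bound $|B(x_0,t)|\geq K^{-1}t^Q$ for a ball centred at some $x_0\in A$---but the execution differs. The paper routes this estimate through the tube zeta function, obtaining $\zet_A(s)\geq K^{-1}\delta^s/s$ for real $s\in(0,Q)$, and then argues by contradiction: if $D(\ze_A)<0$, then $\ze_A$ would be continuous at $s=0$, yet the functional equation of Theorem~\ref{funct} transfers the blow-up of $\zet_A$ at $0^+$ to $\ze_A$. You instead bound the distance zeta integral itself from below by $K^{-1}t^s$ for each small $t$ and let $t\to 0^+$ with $s<0$ fixed, showing divergence directly; this bypasses Theorem~\ref{funct} altogether and is a genuine simplification. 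One minor inaccuracy in your closing commentary: Theorem~\ref{th: 5}(b) carries no hypothesis $\M_*^D(A)>0$ (that condition appears only in part~(c)), so the alternative route $D(\ze_A)=\Mink A\geq 0$ is in fact also available without extra assumptions.
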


\begin{proof}
	Assume to the contrary that $D(\ze_A)<0.$ Then $\ze_A(s)$ is well defined and continuous for $s\in(D(\ze_A),+\infty)$, and in particular, it is continuous at $s=0$.
	
	Let us take any $a\in A$. Since $A_\de\supset B_\de(a)$, and $d(x,A)\leq d(x,a),$ we have that for every real number $s\in(0,Q)$,
	\begin{align*}
		\zet_A(s)    &=\int_0^\de t^{s-Q-1}|A_t|\text{d} t\geq \int_0^\de t^{s-Q-1}|B_t (a)|\text{d}t\\
		&\geq\int_0^\de t^{s-Q-1} K^{-1}t^Q\text{d}t=K^{-1}\dfrac{\de^s}{s},
	\end{align*}
	where $K^{-1}t^Q$ comes from the bound on the measure of the ball $B_t (a)$ given by the Ahlfors regularity condition. Thus, $\zet_A(s)$ (and as a consequence of Theorem \ref{funct} above, $\ze_A(s)$) $\rightarrow +\infty$ as $s\rightarrow 0^+,~s\in\R$. This clearly contradicts the continuity of $\ze_A$ at $s=0$.
\end{proof}

We can now define the {\em complex dimensions of a set A}, provided that the distance or tube zeta functions can be meromorphically extended to an {\em admissible} domain:

\begin{definition}
	A bounded subset $A$ of $\E$ such that $\ze_A$ can be meromorphically extended to an open domain $G$ containing the closed half-plane $\{ \text{Re }s\geq D(\ze_A)\}$ is called {\it admissible}.
\end{definition}

\begin{definition}
	Given an admissible set $A$, we consider the {\it set of poles of $\ze_A$ located on the critical line} $\{\text{Re }s=D(\ze_A)\}$:
	\[ \mathscr{P}_c (\ze_A)=\{\omega\in W:\text{$\omega$ is a pole of of $\ze_A$ and Re $\omega=D(\ze_A)$}\}, \]
	called the {\it set of principal complex dimensions of $A$}.
	
	We call the set of all poles in the region $G$ of meromorphic extension of $\ze_A$ the {\it set of visible complex dimensions of $A$ (with respect to G)}, and we denote it by $\mathscr{P}(\ze_A)$ or $\mathscr{P}(\ze_A,G)$:
	\[ \mathscr{P}(A)=\{\omega\in G : \text{$\omega$ is a pole of $\ze_A$} \} \]
	
\end{definition}

It may not be convenient or even possible to meromorphically extend $\ze_A$ to all of $\C$. These complex dimensions give important geometric information about the set $A$. It is important that the complex dimensions are given as a {\em set} (or even as a multiset), as each dimension can be interpreted to represent a fuller picture into the geometry. For example, given a simple simplicial 3-complex composed of vertices, edges, and faces, the complex dimensions given by the distance zeta function would be $\{ 0,1,2 \}$. 

While complex dimensions can be useful for standard geometric subsets, subsets that exhibit {\em geometric oscillations}, such as Cantor sets, produce nonreal complex dimensions. As proposed by Lapidus and collaborators (see, e.g., \cite{LaFra}, \cite{LaRaZu}, \cite{Lap1}), a fractal set is a set that has nonreal complex dimensions (or more complex behavior, such as essential singularities along the vertical line $\{ \text{Re }s=D(\ze_A) \}$).

As stated above, depending on the set we wish to study, it may be preferential to use the tube zeta function. Before we can do so, we first establish their equivalence in terms of complex dimensions as defined in \cite{LaRaZu}:

\begin{definition}
	Two meromorphic functions $f$ and $g$ on a domain $G\subset\C$ are said to be {\it equivalent} if $D(f)=D(g)$, and their sets of poles contained on the common vertical line $\{ \text{Re }s=D(f)=D(g)\}$ coincide:
	\[ f\sim g \iff D(f)=D(g) ~\text{and } \mathscr{P}_c(f)=\mathscr{P}_c(g). \]
\end{definition}

Now, the relationship between the distance and tube zeta functions is given by
\[ 	\int_{A_\delta}d(x,A)^{s-Q}\text{d}x=\de^{s-Q}|A_\delta|+(Q-s)\int_0^\delta t^{s-Q-1}|A_t|\text{d}t.\]
As long as $\Mink A < Q$, the two zeta functions are equivalent. However, if $\Mink A = Q$, it is possible that the two zeta functions differ by a pole at $s=Q$. The tube zeta function can pick up such poles, but the distance zeta function misses these poles of full dimension. Thus the two functions are almost equivalent, as long as this caveat is kept in mind. 

Actually, much more is true. Namely, on a given domain $G$ of $\mathbb{C}$, $\zet_A$ has a (necessarily unique) meromorphic continuation to $G$ if and only if $\ze_A$ does, and in that case, $\ze_A$ and $\zet_A$ are connected via the fundamental equation (\ref{eq: 1}) of Theorem \ref{funct} on all of $G$: for all $s\in G$, the following {\em functional equation} holds: 
\begin{equation}\label{functeqn}
	\ze_A(s)=\delta^{s-Q}|A_\delta|+(Q-s)\zet_A(s) .
\end{equation} 
As in \cite{LaRaZu}, one then deduces that, provided that $\Mink A<Q$, $\ze_A$ and $\zet_A$ have the same poles in $G$ (with the same multiplicities)---and hence, define the same set of visible complex dimensions in $G$; i.e., $\mathscr{P}(A)=\mathscr{P}(\ze_A,G)=\mathscr{P}(\zet_A,G)$. Also, if, in addition, $D\in G$, then we have, in particular, \[\text{res}(\ze_A(\cdot,A_\delta),D)=(Q-D)\text{res}(\zet_A(\cdot,A_\delta),D) .\] Furthermore, it follows from the theorem about integrals depending holomorphically on a parameter that $\mathscr{P}(A)$ is independent of the choice of $\delta>0$ in the definition of either $\ze_A=\ze_A(\cdot,A_\delta)$ or of $\zet_A=\zet_A(\cdot,A_\delta)$ and that, in particular, the above residues of $\ze_A$ and of $\zet_A$ at $s\in D$ are independent of the choice of $\delta>0$. This latter fact will be used in the statement and the proof of Theorems \ref{res} and \ref{th: 12} below.

In the following theorems (analogs of \cite[Theorem 2.2.3]{LaRaZu} and \cite[Theorem 2.2.14]{LaRaZu}, resp.), we establish the relationship between the residues of the distance and tube zeta functions with the Minkowski content of the set $A$. We use the notation $\ze_A(\cdot,A_\de)$ to emphasize the dependence of the value of $\delta$. In particular, we prove $\delta$ has no significance on the value of the residues, just as it does not impact the complex dimensions.

\begin{theorem}\label{res}
	Assume that the bounded subset $A$ of $E$ is Minkowski nondegenerate, i.e., $0<\M_*^D(A)\leq\M^{*D}(A)<\infty$ for $D:=\text{dim}_B A$ and  $D<Q$. If $\ze_A(s)=\ze_A(s,A_\de)$ can be extended meromorphically to a neighborhood of $s=D$, then $D$ is necessarily a simple pole of $\ze(s,A_\de)$, and 
	\[ (Q-D)\M_*^D(A)\leq\text{res}(\ze_A,D)=\text{res}(\ze_A(\cdot,A_\de),D)\leq(Q-D)\M^{*D}(A). \]
	Furthermore, the value of $\text{res}(\ze_A(\cdot,A_\de),D)$ does not depend on $\de>0.$ In particular, if $A$ is Minkowski measurable (i.e., if $\M_*^D(A)=\M^{*D}(A)<\infty$), then
	\[ \text{res}(\ze_A(\cdot,A_\de),D)=(Q-D)\M^D(A).\]
\end{theorem}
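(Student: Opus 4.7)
The plan is to transfer the problem from $\ze_A$ to the tube zeta function $\zet_A$ via the functional equation (\ref{eq: 1}) from Theorem \ref{funct},
$$\ze_A(s,A_\de)=\de^{s-Q}|A_\de|+(Q-s)\,\zet_A(s,A_\de).$$
Since $D<Q$, the map $s\mapsto\de^{s-Q}|A_\de|$ is entire, so $\ze_A$ admits a meromorphic extension near $s=D$ if and only if $\zet_A$ does, and in that case $\res(\ze_A,D)=(Q-D)\,\res(\zet_A,D)$. Thus it suffices to prove that $D$ is a simple pole of $\zet_A$ with residue trapped between $\M_*^D(A)$ and $\M^{*D}(A)$. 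The independence of $\de$ has effectively been recorded in the discussion preceding the theorem: for $0<\de_1<\de_2$, the difference $\zet_A(s,A_{\de_2})-\zet_A(s,A_{\de_1})=\int_{\de_1}^{\de_2}t^{s-Q-1}|A_t|\,\text{d}t$ is entire in $s$, so leaves the residue at $D$ unchanged.

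The core of the argument is a two-sided squeeze on $\zet_A$. I would fix $\ep>0$ small enough that $\M_*^D(A)-\ep>0$; the definitions of $\M^{*D}(A)$ and $\M_*^D(A)$ as $\limsup$ and $\liminf$ then furnish some $\de_0\in(0,\de]$ with
$$(\M_*^D(A)-\ep)\,t^{Q-D}\;\leq\;|A_t|\;\leq\;(\M^{*D}(A)+\ep)\,t^{Q-D}\qquad\text{for all } t\in(0,\de_0].$$
Splitting $\zet_A(s,A_\de)=\int_0^{\de_0}t^{s-Q-1}|A_t|\,\text{d}t+\int_{\de_0}^{\de}t^{s-Q-1}|A_t|\,\text{d}t$, the tail integral is entire (its integrand is uniformly bounded on $[\de_0,\de]$) and thus contributes nothing to the residue at $s=D$. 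For real $s>D$, direct computation bounds the main piece between $(\M_*^D(A)-\ep)\,\de_0^{s-D}/(s-D)$ and $(\M^{*D}(A)+\ep)\,\de_0^{s-D}/(s-D)$.

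Multiplying through by $(s-D)$ and letting $s\to D^+$ along the real axis then yields
$$\M_*^D(A)-\ep\;\leq\;\liminf_{s\to D^+}(s-D)\zet_A(s)\;\leq\;\limsup_{s\to D^+}(s-D)\zet_A(s)\;\leq\;\M^{*D}(A)+\ep.$$
Because $\zet_A$ is meromorphic in a neighborhood of $D$, the upper bound forces the pole order at $D$ to be at most one (a pole of order $\geq 2$ would make $|(s-D)\zet_A(s)|\to\infty$). Hence $(s-D)\zet_A(s)$ extends holomorphically to $s=D$ with value $\res(\zet_A,D)$, and letting $\ep\to 0^+$ delivers $\M_*^D(A)\leq\res(\zet_A,D)\leq\M^{*D}(A)$; the positivity of $\M_*^D(A)$ then confirms the pole is genuine and simple. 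Multiplying by $Q-D$ yields the stated bounds for $\ze_A$, and the Minkowski measurable case follows immediately by squeezing.

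The main obstacle is conceptual rather than computational: it is the step from a real-variable asymptotic (easily extracted from the definitions of $\M_*^D(A)$ and $\M^{*D}(A)$) to a statement about a complex residue. This is precisely where the meromorphy hypothesis earns its keep, converting boundedness of $(s-D)\zet_A(s)$ along a real ray into simplicity of the pole through a standard removable-singularity argument.
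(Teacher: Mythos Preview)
Your proof is correct and follows essentially the same approach as the paper's: both bound $|A_t|$ above and below by multiples of $t^{Q-D}$, integrate to get a $1/(s-D)$-type bound, multiply by $(s-D)$, and pass to the limit along the real axis. The only cosmetic differences are that the paper works directly with $\ze_A$ (via the functional identity) rather than first reducing to $\zet_A$, and it packages the upper bound using $C_\de:=\sup_{t\in(0,\de]}|A_t|/t^{Q-D}$ together with the $\de$-independence of the residue (so $\res(\ze_A,D)\leq(Q-D)C_\de\to(Q-D)\M^{*D}(A)$ as $\de\to0^+$), whereas you use an explicit $\ep$--$\de_0$ splitting; these are equivalent manoeuvres.
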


\begin{proof}
	Since $\M_*^D(A)>0$, using Theorem \ref{th: 5}(c) we conclude that $s=D$ is a pole of $\zeta_A$. Therefore, it suffices to show that the order of the pole at $s=D$ is not larger than 1. Let us take any fixed $\de>0$, and let
	\[ C_\de := \sup_{t\in(0,\de]}\dfrac{|A_t|}{t^{Q-D}}.\]
	Note that $C_\de<\infty$ because $M^{*D}(A)<\infty$. Then, for $s\in\R$ with $D<s<Q$, we have
	\begin{equation}\label{eq: 2}
		\begin{split}
			\ze_A(s,A_\de)&=\int_{A_\de} d(x,A)^{s-Q}\text{d}x=\de^{s-Q}|A_\de|+(Q-s)\int_0^\de t^{s-Q-a}|A_t|\text{d}t \\
			&\leq C_\de \de^{s-D}+C_\de (Q-s)\dfrac{\de^{s-D}}{s-D}=C_\de (Q-D)\de^{s-D}\dfrac{1}{s-D}.
		\end{split}
	\end{equation}
	Therefore, $0<\ze_A (s,A_\de)\leq C_1(s-D)^{-1}$ for all $s\in(D,Q).$ This shows that $s=D$ is a pole of $\ze_A(s,A_\de)$ which is at most of order 1, and the first claim is established. Namely, $D$ is a simple pole of $\zeta_A$.
	
	As was alluded to earlier, by using the theorem about integrals depending holomorphically on a complex parameter, it is easy to see that for any positive real numbers $\de$ and $\de_1$, with $\de<\de_1$, the difference
	\[ \ze_A(s,A_{\de_1})-\ze_A(s,A_\de)=\int_{A_{\de_1}\setminus A_\de} d(x,A)^{s-Q}\text{d}x \]
	is an entire function of $s$, since $\de\leq d(x,A)\leq\de_1$ for any $x\in A_{\de_1}\setminus A_\de .$ (See also Theorem \ref{th: 8} above.) Therefore, the residue of $\ze_A(s,A_\de)$ at $D$ does not depend on $\de$.
	
	In order to prove the second inequality, it suffices to multiply (\ref{eq: 2}) by $s-D$, with $s$ real, and take the limit as $s\rightarrow D^+$:
	\[ \text{res}(\ze_A(\cdot,A_\de),D)\leq (Q-D)\lim_{s\rightarrow D^+} C_\de \de^{s-D}=(Q-D) C_\de. \]
	Since the residue at $D$ does not depend on $\de$, we establish the second inequality by recalling the definition of $C_\de$ and passing to the limit as $\de\rightarrow 0^+$. The first inequality is proved analogously.
\end{proof}

The following is the counterpart of the above result (Theorem \ref{res}) stated for the tube zeta functions, and is a direct consequence of Lemma \ref{th: 2} and the above proof. 

\begin{theorem}\label{th: 12}
	Assume that $A$ is a Minkowski nondegenerate bounded subset of $\E$ such that $D:=\text{dim}_B A<Q$, and there exists a meromorphic extension of $\zet_A=\zet_A(\cdot,A_\delta)$. Then $D$ is a simple pole of $\zet_A$, and for any positive $\de$, $\textnormal{res}(\zet_A,D)$ is independent of $\de$. Furthermore, we have
	\[ \M_*^D(A)\leq\textnormal{res}(\zet_A,D)\leq \M^{*D}(A). \]
	In particular, if $A$ is Minkowski measurable, then the residue of the tube zeta function of $A$ at $s=D$ is equal to the $D$-dimensional Minkowski content of $A$; that is,
	\[ \textnormal{res}(\zet_A,D)=\M^D(A).\]
\end{theorem}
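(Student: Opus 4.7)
My plan is to derive Theorem \ref{th: 12} directly from Theorem \ref{res} by exploiting the functional equation from Theorem \ref{funct}, which links the distance and tube zeta functions via
\[ \ze_A(s,A_\delta) = \delta^{s-Q}|A_\delta| + (Q-s)\,\zet_A(s,A_\delta). \]
Since the term $\delta^{s-Q}|A_\delta|$ is entire in $s$ and the factor $Q-s$ is nonvanishing at $s=D$ (because $D<Q$ by hypothesis), this equation realizes $\zet_A$ as $\tfrac{1}{Q-s}$ times a meromorphic function with the same poles as $\ze_A$ throughout any domain excluding $s=Q$. In particular, $\zet_A$ admits a meromorphic continuation to a neighborhood of $s=D$ if and only if $\ze_A$ does, and the orders of any poles at $s=D$ match.

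From Theorem \ref{res}, the pole of $\ze_A$ at $D$ is simple, so the functional equation forces $D$ to be a simple pole of $\zet_A$ as well. Taking residues on both sides at $s=D$ and using that $\delta^{s-Q}|A_\delta|$ contributes $0$ while $Q-s$ evaluates to $Q-D$ at the pole, one gets
\[ \textnormal{res}(\ze_A(\cdot,A_\delta),D) = (Q-D)\,\textnormal{res}(\zet_A(\cdot,A_\delta),D). \]
Dividing through by $Q-D>0$ and combining with the bounds
\[ (Q-D)\M_*^D(A)\leq \textnormal{res}(\ze_A(\cdot,A_\delta),D)\leq (Q-D)\M^{*D}(A) \]
already established in Theorem \ref{res} yields the desired two-sided bound $\M_*^D(A)\leq \textnormal{res}(\zet_A,D)\leq \M^{*D}(A)$. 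The $\delta$-independence of $\textnormal{res}(\zet_A,D)$ is then inherited from that of $\textnormal{res}(\ze_A(\cdot,A_\delta),D)$, since $Q-D$ is a constant independent of $\delta$. The Minkowski-measurable case follows immediately by squeezing $\M_*^D(A)=\M^{*D}(A)=\M^D(A)$.

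Alternatively, one can give a self-contained argument along the lines of the proof of Theorem \ref{res}. Starting from the definition $\zet_A(s,A_\delta)=\int_0^\delta t^{s-Q-1}|A_t|\,dt$ and using $|A_t|\leq C_\delta t^{Q-D}$ (where $C_\delta:=\sup_{t\in(0,\delta]}|A_t|/t^{Q-D}<\infty$ since $\M^{*D}(A)<\infty$), one obtains for real $s>D$ the estimate
\[ 0<\zet_A(s,A_\delta)\leq C_\delta\,\frac{\delta^{s-D}}{s-D}, \]
which shows $D$ is at worst a simple pole. Multiplying by $s-D$ and letting $s\to D^+$ gives $\textnormal{res}(\zet_A,D)\leq C_\delta$; passing $\delta\to 0^+$ yields the upper bound $\M^{*D}(A)$. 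The lower bound proceeds symmetrically from $|A_t|\geq c_\delta t^{Q-D}$ with $c_\delta\to\M_*^D(A)$, and the $\delta$-independence follows from the fact that $\zet_A(s,A_{\delta_1})-\zet_A(s,A_\delta)=\int_\delta^{\delta_1}t^{s-Q-1}|A_t|\,dt$ is entire in $s$.

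There is no serious obstacle here, since all the analytic heavy lifting was done in Theorem \ref{res} and the functional equation of Theorem \ref{funct}; the only point requiring a moment of care is verifying that the nonvanishing of $Q-s$ at $s=D$ (ensured by $D<Q$) is what allows the pole orders and residues to transfer cleanly between the two zeta functions.
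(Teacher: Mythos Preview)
Your proposal is correct and matches the paper's own approach: the paper does not give a separate detailed proof of Theorem \ref{th: 12} but simply states that it is ``a direct consequence of Lemma \ref{th: 2} and the above proof'' (i.e., the functional identity together with the argument of Theorem \ref{res}), which is exactly your first derivation via the functional equation and the residue relation $\textnormal{res}(\ze_A,D)=(Q-D)\,\textnormal{res}(\zet_A,D)$. Your alternative self-contained argument is also valid and parallels the structure of the proof of Theorem \ref{res}; the care you take in noting that $D<Q$ is needed so that $Q-s$ does not vanish at the pole is precisely the point that makes the transfer clean.
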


Note that more effort is required to study the case where $D=Q$, since such a value can be a pole of the tube zeta function but could be missed by the distance zeta function; see the above function equation \ref{functeqn} when $s=Q$. See \cite{LaRaZu} for discussion of the Euclidean case, for which the Ahlfors proof is quite similar.

We end this section by defining (much as in \cite[Chapter 4]{LaRaZu}) the {\em relative zeta functions} for Ahlfors spaces, along with the main theorem adapted to this setting.

\begin{definition}
	Let $\om $ be an closed subset of $\E$, not necessarily bounded, but of finite ($Q$-dimensional) Hausdorff measure. Let $A\subset\E$, also possibly unbounded, such that $\om$ is contained in $A_\de$ for some $\de>0$. The {\it distance zeta function $\ze_A (\cdot,\om)$ of $A$ relative to $\om$} (or {\it the relative distance zeta function}) is defined by
	\[ \ze_A(s,\om)=\int_\om d(x,A)^{s-Q}\text{d}x ,\]
	for all $s\in\C$ with $\textnormal{Re }s$ sufficiently large. We will call the ordered pair $(A,\om)$ a {\it relative fractal drum}.
	
	We can, of course, define similarly the {\em tube zeta function of $A$ relative to $\Omega$} (or the {\em relative tube zeta function}), $\zet_A(\cdot,\Omega)$.
\end{definition}

Note that, in the case of the relative fractal drum $(A,A_\delta)$, we recover the standard setting of the distance zeta function (and similarly for the relative tube zeta function $\zet_A(s,\Omega))$, which is defined entirely analogously) and which generalizes $\ze_A$ in the special case of $(A,\Omega)=(A,A_\delta)$. In this manner, relative fractal drums and their relative zeta functions are direct generalizations of $\ze_A$ and of $\zet_A$.. However, relative fractal drums allow for a study of specific subsets of the tubular neighborhood. Relative fractal drums provide several interesting examples in Euclidean spaces, such as spaces that have negative values or even $-\infty$ as their relative Minkowski dimension. They can also be used to suitably decompose a given bounded set $A$ and compute the fractal zeta functions and the complex dimensions of $A$. See \cite[Chapters 4 and 5]{LaRaZu} and \cite{LaRaZu4} for a detailed exploration of the Euclidean case.

In this way, we can define the relative Minkowski content and dimension:

\begin{definition}
	The {\it upper r-dimensional Minkowski content of $A$ relative to $\om$} is defined as 
	\[\mathcal{M}^{*r}(A,\om)=\limsup_{t\rightarrow 0}\dfrac{|A_t \cap \om|}{t^{Q-r}}.\]
	We then define the {\it (relaltive) upper Minkowski dimension} by
	\begin{align*}
		\Mink (A,\om)&=\inf\{r\in\R:\mathcal{M}^{*r}(A,\om)=0\}=\inf\{r\in\R:\mathcal{M}^{*r}(A,\om)<\infty\}
		\\ &=\sup\{r\in\R:\mathcal{M}^{*r}(A,\om)=+\infty\}.
	\end{align*}
\end{definition}

We can thus define the relative distance zeta function and prove that its main properties still hold:

\begin{theorem} \label{th: 6}
	Let $\om$ be an open subset of $\E$ of finite Hausdorff measure, and let $A\subset\E$ be such that $\om\subset A_\de$ for some $\de>0$. Then:
	
	$(a)$ The relative distance zeta function $\ze_A(s,\om)$ is holomorphic in the half-plane $\{\textnormal{Re} s>\Mink (A,\om)\}$, and for all complex numbers $s$ in that region,
	\[\ze_A'(s,\om)=\int_\om d(x,A)^{s-Q}\log d(x,A)\textnormal{d}x .\]
	
	$(b)$ The lower bound of absolute convergence is optimal, i.e.
	\[ \Mink (A,\om)=D(\ze_A(\cdot,\om)),\]
	where $D(\ze_A(\cdot,\om))$ is the abscissa of absolute convergence of $\ze_A(s,\om)$.
	
	$(c)$ If the Minkowski dimension $D:=\text{dim}_B (A,\om)$ exists, $D<Q$, and $\M_*^D(A,\om)>0$, then $\ze_A(s,\om)\rightarrow+\infty$ as $s\in\R$ converges to $D$ from the right. In this case, the abscissa of holomorphic continuation coincides with the abscissa of absolute convergence: $D_\text{hol}(\ze_A(\cdot,\Omega))=D(\ze_A(\cdot,\Omega))=\text{dim}_B(A,\Omega).$
\end{theorem}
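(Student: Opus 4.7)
The plan is to proceed exactly as in the proof of Theorem \ref{th: 5}, treating $\Omega$ as a replacement for $A_\delta$, with the essential input being relative versions of the Harvey--Polking lemma (Lemma \ref{lem: 1}) and the functional equation in Lemma \ref{th: 2}. The hypotheses $\Omega\subseteq A_\delta$ and $|\Omega|<\infty$ (in the Hausdorff measure sense) are what make this substitution work even though $\Omega$ need not be bounded and $A$ need not be bounded.

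First I would establish the relative Harvey--Polking statement: for $\gamma<Q-\Mink(A,\Omega)$,
\[\int_\Omega d(x,A)^{-\gamma}\,\textnormal{d}x<\infty.\]
The argument mirrors Lemma \ref{lem: 1}, but with the dyadic decomposition applied to $\Omega$ rather than $A_\delta$: write $\Omega=(\Omega\cap\overline{A})\cup\bigcup_{i\ge 0}(\Omega\cap B_i)$ with $B_i:=A_{2^{-i}\delta}\setminus A_{2^{-i-1}\delta}$, note that $\Omega\cap\overline{A}$ contributes nothing since $|\overline A|=0$ whenever $\Mink(A,\Omega)<Q$, and use the relative bound $|A_t\cap\Omega|\le C(\delta)t^{Q-s}$ for $s\in(\Mink(A,\Omega),Q-\gamma)$, which holds because $\M^{*s}(A,\Omega)=0$. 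The same geometric series as in Lemma \ref{lem: 1} then produces a finite bound. An identical three-case argument (positive, zero, negative $\gamma$) based on Folland's layer-cake formula (Lemma 2.2) yields the relative version of Lemma \ref{th: 2}:
\[\int_\Omega d(x,A)^{-\gamma}\,\textnormal{d}x=\delta^{-\gamma}|A_\delta\cap\Omega|+\gamma\int_0^\delta t^{-\gamma-1}|A_t\cap\Omega|\,\textnormal{d}t,\]
valid for $\gamma\in(-\infty,Q-\Mink(A,\Omega))$, together with the analog of Lemma \ref{lem: 4} for $\gamma>Q-\Mink(A,\Omega)$, which follows from the existence of a sequence $s_k\to 0^+$ with $|A_{s_k}\cap\Omega|/s_k^{Q-\sigma}\to\infty$ for $\sigma<\Mink(A,\Omega)$ close to it.

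With these tools in hand, part $(a)$ is proved by repeating verbatim the difference-quotient argument from Theorem \ref{th: 5}$(a)$: for fixed $s$ with $\textnormal{Re}\,s>\Mink(A,\Omega)$, form
\[R(h)=\int_\Omega\Big(\tfrac{d(x,A)^h-1}{h}-\log d(x,A)\Big)d(x,A)^{s-Q}\,\textnormal{d}x,\]
expand $e^{h\log d}$ via its MacLaurin series, and bound $|R(h)|\le\tfrac12 C|h|\int_\Omega d(x,A)^{\textnormal{Re}\,s-Q-2\varepsilon}\,\textnormal{d}x$, which is finite by the relative Harvey--Polking lemma for $\varepsilon$ small enough. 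Part $(b)$ then follows from $(a)$ combined with the relative analog of Lemma \ref{lem: 4}. Part $(c)$ uses the lower-content hypothesis $|A_t\cap\Omega|\geq Ct^{Q-D}$ in the functional equation to obtain, for $\gamma\in(0,Q-D)$ and $s=Q-\gamma$,
\[\zeta_A(s,\Omega)\ge\gamma C\int_0^\delta t^{Q-D-\gamma-1}\,\textnormal{d}t=\gamma C\frac{\delta^{Q-D-\gamma}}{Q-D-\gamma}\longrightarrow+\infty\]
as $\gamma\to(Q-D)^-$, i.e.\ as $s\to D^+$ along $\R$, which forces $D_{\textnormal{hol}}(\zeta_A(\cdot,\Omega))\ge D$ and hence equality with $D(\zeta_A(\cdot,\Omega))=\Mink(A,\Omega)$.

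The only real obstacle compared to the absolute case is checking that the finiteness of $|\Omega|$ (rather than of the ambient space) is sufficient to push the dyadic and layer-cake arguments through when $\Omega$ or $A$ is unbounded; this is handled by observing that every relevant integrand is supported in $\Omega\subseteq A_\delta$, so the bounds $|A_t\cap\Omega|\le|\Omega|<\infty$ replace the boundedness assumption used in Section 2. Everything else is formal translation, including the alternative route via the theorem on integrals depending holomorphically on a parameter.
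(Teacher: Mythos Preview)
Your proposal is correct and takes essentially the same approach as the paper: the paper's proof is literally the single sentence ``The proof is similar to that of Theorem \ref{th: 5}, as the Harvey--Polking Lemma generalizes to the case of relative fractal drums,'' and your write-up simply spells out that generalization in detail (relative Harvey--Polking, relative functional identity, relative analog of Lemma \ref{lem: 4}, then the same difference-quotient and lower-content arguments). The only cosmetic slip is that in the dyadic step you should have $|\overline{A}\cap\Omega|=0$ rather than $|\overline{A}|=0$, and $A_\delta\cap\Omega=\Omega$ since $\Omega\subseteq A_\delta$; neither affects the argument.
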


\begin{proof}
	The proof is similar to that of Theorem $\ref{th: 5}$, as the Harvey-Polking Lemma generalizes to the case of relative fractal drums (\cite{Zu4,LaRaZu}).
\end{proof}

If the difference in two domains $\Omega_1$ and $\Omega_2$ is ``nice enough'' (such as both sets including the entirety of $A_\epsilon$ for $\epsilon$ small), then we define the relative fractal drums to be equivalent. Here again, the proof is similar to the Euclidean case and is thus omitted.

\begin{prop}
	Assume that $(A,\om_1)$ and $(A,\om_2)$ are relative fractal drums in $\E$ such that $f_j(s):=\int_{\om_j\setminus(\om_1\cap\om_2)} d(x,A)^{s-Q}\text{d}x$ are entire functions, for $j=1,2$. Then the corresponding distance zeta functions are equivalent,
	\[ \ze_A(\cdot,\om_1)\sim\ze_A(\cdot,\om_2).\]
\end{prop}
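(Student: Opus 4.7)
The plan is to decompose each $\om_j$ into the common piece $\om_1\cap\om_2$ and the residual piece $\om_j\setminus(\om_1\cap\om_2)$, and to exploit the hypothesis that only the common piece contributes non-entire information to either zeta function.

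First I would observe that, for $\text{Re }s$ sufficiently large (so that all integrals below converge absolutely),
\[ \ze_A(s,\om_j) = \int_{\om_1\cap\om_2} d(x,A)^{s-Q}\text{d}x + \int_{\om_j\setminus(\om_1\cap\om_2)} d(x,A)^{s-Q}\text{d}x = g(s) + f_j(s), \]
where $g(s):=\ze_A(s,\om_1\cap\om_2)$. Since $\om_1\cap\om_2$ is open, of finite Hausdorff measure, and contained in $A_\de$ (for the larger of the two $\de$'s associated with the $\om_j$), $g$ is itself a bona fide relative distance zeta function in the sense preceding Theorem \ref{th: 6}, so its abscissa of convergence $D(g)$ is well defined.

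Next I would compare abscissae and poles simultaneously. Since each $f_j$ is entire, the defining integral of $\ze_A(\cdot,\om_j)$ converges absolutely exactly where that of $g$ does, giving $D(\ze_A(\cdot,\om_1)) = D(g) = D(\ze_A(\cdot,\om_2))$; call this common value $D_0$. Moreover, on any open domain $G\subset\C$ to which either $\ze_A(\cdot,\om_1)$ or $\ze_A(\cdot,\om_2)$ admits a meromorphic continuation, the identity
\[ \ze_A(s,\om_1) - \ze_A(s,\om_2) = f_1(s) - f_2(s) \]
persists by the principle of analytic continuation, and its right-hand side is entire. Hence $\ze_A(\cdot,\om_1)$ and $\ze_A(\cdot,\om_2)$ share every pole, with multiplicity, throughout $G$; in particular $\mathscr{P}_c(\ze_A(\cdot,\om_1)) = \mathscr{P}_c(\ze_A(\cdot,\om_2))$, which together with the equality of abscissae yields the desired equivalence $\ze_A(\cdot,\om_1) \sim \ze_A(\cdot,\om_2)$.

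I do not anticipate any serious obstacle here: the argument is essentially additivity of integrals combined with the entireness hypothesis on $f_1$ and $f_2$. The only bookkeeping step is to verify that $\om_1\cap\om_2$ inherits openness, finite Hausdorff measure, and inclusion in some tubular neighborhood of $A$, which is immediate from the corresponding properties of $\om_1$ and $\om_2$.
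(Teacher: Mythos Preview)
Your proof is correct and is precisely the standard argument the paper has in mind when it omits the proof and refers to the Euclidean case in \cite{LaRaZu}: decompose each $\om_j$ into the common part $\om_1\cap\om_2$ and the remainder, and use that $\ze_A(\cdot,\om_1)-\ze_A(\cdot,\om_2)=f_1-f_2$ is entire to match both abscissae and critical-line poles. There is nothing to add.
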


Finally, we define the complex dimensions of a relative fractal drum, in an analogous manner to the standard distance zeta function.

\begin{definition}
	Assume that $(A,\om)$ is a relative fractal drum in $\E$ such that its distance zeta function possesses a meromorphic extension to a domain $G$ which contains the critical line $\{ \text{Re }s=D(\ze_A(\cdot,\om))\}$ in its interior. The set of poles of $\zeom$ located on the critical line is called {\it the set of principal complex dimensions of the relative fractal drum $(A,\om)$}, or {\it the set of relative principal complex dimensions of $(A,\om)$}, and is denoted by  $\mathscr{P}_c (\zeom).$ Furthermore, the set of poles of $\ze_A(\cdot,\om)$ belonging to $G$ is called the set of {\em visible complex dimensions of }$(A,\om)$ (relative to $G$) and is denoted $\mathscr{P}(A,\om)$.
\end{definition}

\begin{remark}
	Entirely analogous results and definitions can be obtained for the relative tube zeta functions $\zet_A(\cdot,\Omega)$. In particular, if $\overline{\text{dim}}_B(A,\Omega)<Q$, then, in light of the counterpart of the functional equation {\upshape(\ref{functeqn})} for relative fractal drums, the complex dimensions (and their multiplicities) are the same whether they are defined via $\zeta_A(\cdot,\Omega)$ or via $\zet_A(\cdot,\Omega)$.
\end{remark}


\section{Examples}

Ahlfors spaces occur throughout mathematics in a variety of forms, such as Riemannian and sub-Riemannian manifolds, self-similar fractals in $\R^N$, and harmonic embeddings of fractals. In this section, we will look at two specific examples, the Heisenberg groups and Laakso spaces, both to calculate the complex dimensions of certain subsets as well as to illustrate the difficulties that arise.

\subsection{The Heisenberg Groups}

\begin{definition}
	The {\it n-dimensional Heisenberg group} $\He^n$ in its algebra representation is thought of as $\mathbb{H}^n=\R^n\times\R^n\times\R$ with group multiplication given by 
	\[h\cdot h'=(x,y,t)\cdot(x',y',t')=(x+x',y+y',t+t'+2\sum_{k=1}^n (x_k'y_k-x_ky_k')).\]
	
	There is a natural dilation action $\partial_r(x,y,t)=(rx,ry,r^2t),$ $r>0$, that gives rise to the homogeneous norm called the {\it Heisenberg gauge}, 
	\[\|(x,y,t)\|=\left( \sum_{k=1}^n (x_k^2+y_k^2)^2+t^2\right)^\frac{1}{4}.\]
	This norm has the properties that $\|\partial_r(x,y,t)\|=r\|(x,y,t)\|$ and $\|h^{-1}\|=\|h\|$, where $h^{-1}=(-x,-y,-t)$.
	
	This norm defines a metric on $\He^n$, $d(x,y)=\|x^{-1}y\|$.
\end{definition}

The Haar measure on $\He^n$, denoted by $|\cdot|$ and given by Lebesgue measure (up to a constant multiple), yields
\[|B_r(x)|=r^{2n+2}|B_1(x)|.\]
The Haar measure is invariant under left multiplication.

Thus the Heisenberg group is Ahlfors regular of Ahlfors dimension $Q=2n+2$ (i.e. it is $(2n+2)$-regular), while its topological dimension is $T=2n+1$. More information on these spaces, and the more general Carnot--Carath\'eodory spaces, can be found in \cite{Bj-Bj,DaMcCS,DaGaNh}.

In what follows, we will be working with the space $\He^1=\He$, and denoting points by their ordered triples. Note that $\He$ is Alhfors 4-regular. Also note there will be a few notational differences due to the standard usage of $t$ as the last coordinate in the Heisenberg setting.

\begin{example}
	
	If we fix any point $(x_1,y_1,t_1)=p$ in $\He$, then we know that $|A_r|=|(B_r(p))|=r^4|(B_1(p))|$, where $A=\{p\}$, and thus we can explicitly calculate the complex dimension of the point $p$ by using the tube zeta function $\zet_A=\zet_p$:
	
	\begin{align*}
		\zet_p(s)&=\int_0^r w^{s-4-1} |A_w| \text{d}w= |(B_(p))|\int_0^r w^{s-4-1}w^4 \text{d}w\\
		&= |(B_(p))| \dfrac{r^s}{s}
	\end{align*}
	which has a simple pole at $s=0$. Thus the set of complex dimensions of the point $p$ is $\{0\}$.
	
\end{example}

\begin{example}
	
	Let us take as our set $A\subseteq \He$ the line segment starting at the origin and going to (0,0,1) (or more generally, any line segment that lies on the t-axis). Explicitly, the distance function is given by 
	\begin{equation} \label{eq: dist}
		d((x,y,t),(x_1,y_1,t_1))=\sqrt[4]{((x-x_1)^2+(y-y_1)^2)^2+(t-t_1+2yx_1-2xy_1)^2}.
	\end{equation}
	Since we are along the t-axis, this equation simplifies to the much neater form:
	\[ d((x,y,t),(x_1,y_1,t_1))=((x^2+y^2)^2+(t-t_1)^2)^{1/4}, \]
	and we see that at any given point on our line segment, the horizontal cross-section is a circle. Thus $A_r$ will be a standard cylinder with the hemispheres of the Heisenberg unit ball as the caps.
	
	Again using the tube zeta function, we see that
	\begin{align*}
		\zet_A(s)&=\int_0^r w^{s-4-1}|A_w|\text{d}w 
		= \int_0^r w^{s-4-1} (\pi w^2\cdot1 + w^4|B_1(p)|)\\
		&=\pi \dfrac{r^{s-2}}{s-2}+|B_1(p)|\dfrac{r^s}{s},
	\end{align*}
	which has simple poles at $s=2$ and $s=0$, from which we conclude that our set of complex dimensions is $\mathscr{P_A}=\{0,2\}.$
	
	The pole at $s=0$ can be interpreted as the dimension of the endpoints of the line segment. Interestingly, this tells us that a line segment along the t-axis in the Heisenberg group $\He=\He^1$ effectively is of dimension 2.
	
\end{example}

Line segments elsewhere in the plane are much harder to determine the complex dimensions of, due to the twisting nature of the Heisenberg group. However, we conjecture that a line segment entirely along the x or y axes of $\He$ should be of dimension 1, which would then imply a dimensional shift as line segments become ``more vertical''.

It is also possible that a one-dimensional ``line segment'' in the Heisenberg group is a curve that accounts for this twist, so that the $\delta$-neighborhood around the curve is easily visualized as a curved cylinder.

\subsection{Laakso Spaces}

In 2000, Tomi Laakso published a paper describing a construction of spaces now known as Laakso spaces (\cite{Laa}). These are $Q$-regular path spaces for any $Q>1$ that also satisfy a weak (1,1)-Poincar\'e inequality; see e.g. \cite{Hei} and \cite{HeKo} for an exploration of this condition. Satisfying such an inequality means that Laakso spaces enjoy many Euclidean properties while providing a class of Ahlfors regular spaces for {\em any} dimension $>1$. See \cite{Stein} for further study of analytic properties arising in this space.
We construct here a simple Laakso space based off of the $1/4$-th Cantor set.

We first construct the $1/4$-th Cantor set, called $K$ henceforth, viewed as an iterated function system (IFS). Our IFS is described by the following two similarity maps of $\R$:
\[ \phi_1(x)=\frac{x}{4},~~\phi_2(x)=\frac{x}{4} + \frac{3}{4}. \]
$K$ is then the unique nonempty compact subset of $\R$ such that $\phi_1(K)\cup\phi_2(K)=K$, with all the usual properties of the Cantor set with a scaling factor of 1/4 (instead of 1/3 for the classic ternary Cantor set). It is self-similar and is a topological Cantor set, i.e. it is perfect and totally disconnected --- and hence, it is uncountable, with the cardinality of the continuum. Further, $K$ has Hausdorff dimension $Q_K=\dfrac{\log2}{\log4}=\frac{1}{2}$.

The IFS construction is important for us as it gives a unique addressing scheme for any point in $K$. Specifically, any point can be identified to an infinite binary string $(i_j)_{j=1}^\infty$, where $i_j\in\{1,2\}$. This string represents the composition of maps $\phi_1$ and $\phi_2$ such that our given point is the limit of compositions, $\lim_{n\rightarrow\infty} \phi_{i_n}\circ\dots\circ\phi_{i_2}\circ\phi_{i_1}([0,1]).$ We use the notation $\circ_{j=1}^\infty \phi_{i_j} ([0,1]) $ to represent this limit of compositions.

Using finite binary strings, we can define subsets of $K$ in terms of the different level approximations (or prefractal approximations) under the IFS. We define the subset $K_a$, where $a$ is a finite string $(i_j)_{j=1}^n$, as the set of points in $\circ_{j=1}^n \phi_{i_j} (K) = \phi_{i_n}\circ\dots\circ\phi_{i_2}\circ\phi_{i_1}(K)$. For example, letting $i_1=2$ and $i_2=1$, then $K_{12}$ is the subset formed by first applying $\phi_2$ followed by $\phi_1$: \[K_{12}=\phi_1(\phi_2(K))=\frac{1}{4}\bigg(\frac{1}{4}K+\frac{3}{4}\bigg).\]

\begin{figure}[]
	\centering
	
	\includegraphics[width=0.6\textwidth]{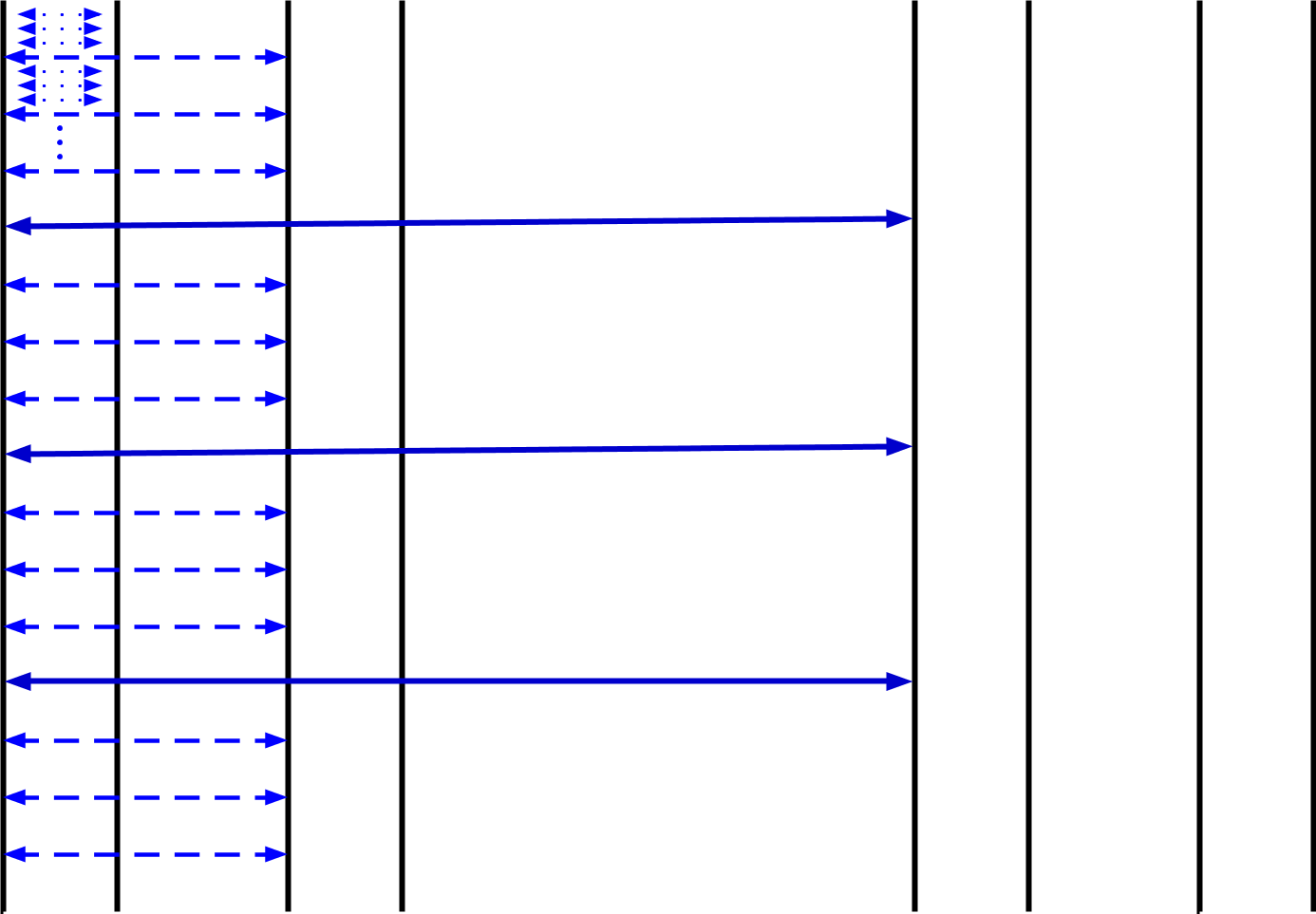}
	
	\caption{A representation of the third-level iterate of our Laakso space, with the left-hand wormholes identified.}
	\label{fig: 1}
\end{figure}
The base space for our Laakso space will then be $K\times I$ in $\R^2$, where $I$ is the unit interval $[0,1]$, with the metric induced from $\R^2$. For our eventual quotient space, we restrict ourselves to a fairly simple choice of identification pairs, otherwise known as {\it wormholes}. Our construction will place wormholes at heights of 1/4-th powers, where 1/4 is equal to the scaling ratio that defines the Cantor set $K$, see Figure \ref{fig: 1}.

Starting with the first identifications, we define the first level wormhole function
\[ \omega(i)=4^{-1}i,~~~0\leq i < 4, \]
so that $\omega(0)=0$, $\omega(1)=1/4$, and so on. We continue this construction for the second level function defined as
\[ \omega(m_1,m_2)=4^{-1}(m_1+4^{-1}m_2),~~~0\leq m_i < 4, \]
giving 16 points from 0 to 15/16. In general,
\[ \omega(m_1,\ldots, m_k) = \sum_{i=1}^k m_i \prod_{h=1}^i 4^{-1} .\]

Provided $m_k > 0$ to avoid overlap, we call the sets $K \times \omega(m_1,\ldots,m_k)$ in $K\times I$ the {\it wormhole levels of order} $k$. Given any finite binary string of length $k-1$, called $a$, identify $K_{a1}\times \omega(m_1,\ldots,m_k)$ with $K_{a2}\times \omega(m_1,\ldots,m_k)$. Taking all such identifications, the resulting quotient space is our Laakso space $F$; see Figure \ref{fig: 1} for a representation of the wormholes up to order 3 along the interval $\{0\}\times I$. Denote the natural projection from $K\times I\rightarrow F$ by the map $s$.

The Laakso space can also be defined as a limit of graphs, with identifications made at each step (see \cite{Ba-Ev}).

The Ahlfors measure on $F$ will essentially be the same as the probability measure on $K\times I$ (i.e. the push forward of the normalized Hausdorff measure $H^{1/2}$ on $K$ crossed with the Lebesgue measure on $I$), since only pairs of points are identified at a time. Define a metric on $F$ as 
\[ d(x,y):=\inf\{H^1(p)| s(p) \text{is a continuous path joining $x$ and $y$}\}, \]
where $p \subset K\times I$ is the pre-image of a path composed of countably many vertical line segments that are connected via wormholes in the quotient space $F$ and $H^1(p)$ is the one-dimensional Hausdorff measure (or simply, the sum of the lengths) of the path $p$. Note that $H^1$ (used on the pre-image) is different from the Ahlfors measure used on $F$; we use the above definition to match historical literature on Laakso space. As the metric is a sum of vertical distances, since horizontal jumps along wormholes carry no distance, we define the height of $x\in F$ as $h(x)$. Thus, if $x,y$ can be connected by an only upward or only downward path, then
\[ d(x,y)=|h(x)-h(y)|. \]
If $p$ is an s-image of a countable number of line segments, then the length of $p$ will simply be the sum of the lengths of the line segments. The following theorem importantly guarantees and classifies the existence of geodesics in $F$ (see \cite[Proposition 3 and Theorem 1]{Cap}). 

\begin{theorem}
	Let $x,y\in F$. There exists a path connecting $x$ and $y$ that requires no more than two turning points. The minimal such path is a geodesic pf $F$.
\end{theorem}

\begin{figure}[t]
	
	\centering       

	\begin{tikzpicture}[x=0.75pt,y=0.75pt,yscale=-1,xscale=1]

	\draw    (30,00) -- (30,120) ;
	 
	\draw    (170,00) -- (170,120) ;
	 
	\draw    (200,00) -- (200,120) ;
	 
	\draw    (60,00) -- (60,120) ;
	
	\draw    (40,00) -- (40,120) ;
 
	\draw    (70,00) -- (70,120) ;

	\draw    (180,00) -- (180,120) ;
	
	\draw    (210,0) -- (210,120) ;

	\draw[thick, color=blue, ->] (28,50) -- (28,60);
	\draw [dashed, color=blue, ->]   (30,60) -- (170,60) ;
	
	\draw[thick, color=blue, ->] (168,60) -- (168,50);
	\draw [dashed, color=blue, ->]   (170,50) -- (200,50) ;

	\draw[thick, color=blue,->] (198,50) -- (198,58.5);
	\draw [dashed, color=blue, ->]   (200,58.5) --(210,58.5) ;
	\end{tikzpicture}

	\caption{An example of a geodesic connecting the point $(0,\frac{129}{256})$ to $(\frac{63}{64},\frac{33}{64})$ by passing through the wormholes at $(0,\frac{1}{2})$ (arrow exaggerated), $(\frac{3}{4},\frac{3}{16})$ and $(\frac{15}{16}, \frac{33}{64})$, using a third level Laakso space approximation.}
	\label{fig: path}
	
\end{figure}

With this metric and the stated Ahlfors measure, $F$ is thus an Ahlfors space of dimension $Q=3/2$. With the space defined, we now look at several subsets of $F$ and compute their complex dimensions.

\begin{example}\label{lak}
	Let us first look at a single point. Let $A = \{(0,1/4)\}$ and let $\delta = 1/4$ (for an explicit calculuation). Using the tube zeta function, we have
	\[ \zet_A (s)= \int_0^\delta t^{s-1/2}|B_t(A)|\text{d}t ,\]
	where $B_t(A)$ is the ball of radius $t$ centered at point $A$, and $|\cdot|$ is the Ahlfors measure on $F$.
	
	Note that since our point is a wormhole of the largest level, any geodesic connecting to any other point in the space will have at most {\emph one} turning point.

	We find that $|B_r(A)|=2r$ for $2/16\leq r \leq 1/4$, as these neighborhoods include all of $ K\times[1/4+r,1/4-r]$ and we are using a probability measure with total measure of 1. Once $1/16\leq r<2/16$, the neighborhood can no longer reach $(3/16, 1/4)$ or $(15/16,1/4)$, but, for any integer $k>2$, the neighborhood {\em does} cross through at least one point of $4^{-k}$, and so we can view the segments along $ K|_{[3/16,1/4]}\times[0,1/2]$ and $K|_{[15/16,1]}\times[0,1/2] $ as "splitting in two"; see Figure \ref{fig: 2}. Note that each region of $K$ in this scenario has a weighted measure of $1/4$, as each region represents a fourth of the full Cantor set. For $1/16\leq r\leq 2/16$, we thus see that $|B_r(A)|=\frac{1}{4}2(2r)+\frac{1}{4}(4(r-\frac{1}{16}))$, where we use a weight of $1/4$ for each region, with the first term representing two segment regions of length $2r$ and the second term in the addition represents 4 segment regions of length $(r-1/16)$, as the paths must travel up or down from (0,1/4) (or (3/4, 1/4)) a distance of 1/16 to reach the first wormhole connecting to any point in these regions. 
	
	\begin{figure}[t]
		
		\centering
		
		\begin{tikzpicture} [scale = 10]

			\draw [very thick] (0,1/6) -- (0,2/6);
			\draw [dashed] (0,1/6) -- (1/16,1/6);
			\draw [dashed] (0,2/6) -- (1/16,2/6);
			\draw [dashed] (1/16,1/6) -- (1/16,2/6);	
			\draw (-.01,1/4) -- (.01,1/4);
			\node [left] at (0,1/4){(0,$\frac{1}{4}$)};
			\draw [very thick] (3/4,1/6) -- (3/4,2/6);
			\draw [dashed] (3/4,1/6) -- (13/16,1/6);
			\draw [dashed] (3/4,2/6) -- (13/16,2/6);
			\draw [dashed] (13/16,1/6) -- (13/16,2/6);
			\draw [very thick] (3/16,28/96) -- (3/16,32/96);
			\draw [dashed] (3/16,28/96) -- (4/16,28/96);
			\draw [dashed] (3/16,32/96) -- (4/16,32/96);
			\draw [dashed] (4/16,28/96) -- (4/16,32/96);
			\draw [very thick] (3/16,16/96) -- (3/16,20/96);
			\draw [dashed] (3/16,16/96) -- (4/16,16/96);
			\draw [dashed] (3/16,20/96) -- (4/16,20/96);
			\draw [dashed] (4/16,16/96) -- (4/16,20/96);
			\draw [very thick] (15/16,28/96) -- (15/16,32/96);
			\draw [dashed] (15/16,28/96) -- (16/16,28/96);
			\draw [dashed] (15/16,32/96) -- (16/16,32/96);
			\draw [dashed] (16/16,28/96) -- (16/16,32/96);
			\draw [very thick] (15/16,16/96) -- (15/16,20/96);
			\draw [dashed] (15/16,16/96) -- (16/16,16/96);
			\draw [dashed] (15/16,20/96) -- (16/16,20/96);
			\draw [dashed] (16/16,16/96) -- (16/16,20/96);
			
		\end{tikzpicture}

		\caption{A representation of the $t=\frac{1}{12}$ neighborhood around the point $(0,\frac{1}{4})$. Note that countably many line segment wormholes are identified with these four segments.}
		\label{fig: 2}
		
	\end{figure}
	
	Similarly, for $2/64 \leq r \leq 1/16$, the neighborhoods contain all of $ K|_{[0,1/16]}\times[1/16,3/16]$ and $ K|_{[3/4,13/16]}\times[1/16,3/16]$ while no longer containing $ K|_{[3/16,1/4]}\times[1/16,3/16]$ and $K|_{[15/16,1]}\times[1/16,3/16] $ at all, and so $|B_r(A)|=\frac{1}{2}2r$. For $1/64 \leq r \leq 2/64$, $|B_r(A)|= \frac{1}{2}(\frac{1}{2}2r)+\frac{1}{2}(\frac{1}{2}2(r-\frac{1}{64})),$ where we rearranged terms to powers of 2, with a similar pattern continuing as $r$ goes to $0$.

	Thus we can solve for $\zet_A$ as follows:
	\begin{align*}
		\int_0^\delta t^{s-1/2} |B_t(A)|\text{d}t &=\sum_{k=1}^\infty \int_{2/4^{k+1}}^{1/4^k}t^{s-1/2}2^{-k+1}2t\text{d}t + \sum_{k=1}^\infty \int_{1/4^{k+1}}^{2/4^{k+1}} t^{s-1/2}2^{-k}(6t-4^{-k})\text{d}t \\
		&= \sum_{k=1}^\infty \dfrac{2^{-k+2}}{s-1/2}\bigg[\bigg(\dfrac{1}{4^k}\bigg)^{s-1/2}-\bigg(\dfrac{2}{4^{k+1}}\bigg)^{s-1/2}\bigg]\\
		&+  2^{-k}\dfrac{6}{s-1/2}\bigg[\bigg(\dfrac{2}{4^{k+1}}\bigg)^{s-1/2}-\bigg(\dfrac{1}{4^{k+1}}\bigg)^{s-1/2}\bigg] \\
		&-  2^{-3k}\dfrac{1}{s-3/2}\bigg[\bigg(\dfrac{2}{4^{k+1}}\bigg)^{s-3/2}-\bigg(\dfrac{1}{4^{k+1}}\bigg)^{s-3/2}\bigg],
	\end{align*}
	where in the second step we merged the series into one as they were absolutely convergent. After some expansion and factoring, we can rewrite this entire series as:	
	\begin{equation}\label{lakseries}
		\left(\dfrac{4(1-2^{-s+1/2})}{s-1/2}+\dfrac{6(2^{-s+1/2})(1-2^{-s+1/2})}{s-1/2}-\dfrac{2^{-s+3/2}(1-2^{-s+3/2})}{s-3/2}\right)\sum_{k=1}^\infty (2^{-2s})^k .
	\end{equation}
	There is a removable singularity that occurs as $s=1/2$ in the first two fractions, and a removable singularity at $s=3/2$ in the third. The geometric series we meromorphically extend to $\C$, using its formula of convergence:
	\[\sum_{k=1}^\infty (2^{-2s})^k = \dfrac{2^{-2s}}{1-2^{-2s}}. \] 
	
	We conclude that $\zet_A$ has a unique meromorphic continuation to all of $\C$ given by $\zet_A(s)=\phi(s)\left(\dfrac{2^{-2s}}{1-2^{-2s}}\right)$, for all $s\in\C$, where $\phi$ is an entire function such that $\phi(s)\neq 0$ whenever $2^{-2s}=1.$ The latter fact can be easily established by considering the explicit expression of $\phi(s)$, given by the term between parentheses in the left-hand side of (\ref{lakseries}). We omit the elementary, but tedious, calculations.
	
	Therefore, the poles of $\zet_A$ thus occur precisely when $1-2^{-2s}=0$, which gives the following set of complex dimensions
	\[ \D = \mathscr{P}_A = \{ 0 + \frac{i \pi  n}{\log 2} : n\in\Z \} = \frac{i \pi \mathbb{Z}}{\log 2}, \]
	with all of the complex dimensions being simple and exact, i.e. actual poles of $\zet_A$.
	
\end{example}

This tells us that wormhole points, while zero dimensional, have the same complex dimensions as those of the 1/4th Cantor set. These complex dimensions capture the geometric oscillations occurring in the neighborhood around the point, which can be pictured as ``branching paths'' across the different wormholes. 

Since the set of wormholes is dense in our space, we expect any point to have the same complex dimensions. It would be interesting to further investigate this question.

\begin{example}
	We now let $A=\{0\}\times K$; that is, the 1/4th-Cantor set that lies on the vertical interval $[0,1]$, and $\delta = 1/4$. Note that for $\de$ between $1/4^k$ and $1/4^{k+1}$, $k\geq 1$, we can view the neighborhood around $A$ as if we were at the $k$-th step of construction of the 1/4-Cantor set. For example, if $k=1$ and $\de$ in the above range, we are essentially viewing the neighborhood for the set $\{ (0,y) : y\in[0,1/4]\cup[3/4,1] \}$, since $\de$ is large enough to make this set and $\{0\}\times K$ indistinguishable in terms of our zeta function. In other words, at the kth level of the 1/4 Cantor set construction, we have $2^k$ ``full intervals'' of length $4^{-k}$, plus $2^k-1$ replicas of Example \ref{lak}, since every two endpoints of the $k$th level approximation together give one side of a point neighborhood.	The tube zeta function is thus	
	\begin{align*} \tilde{\zeta_A}(s)&=\int_0^{1/4} t^{s-3/2-1}|A_t|\text{d}t \\
		&= \sum_{k=1}^\infty \int_{2/4^{k+1}}^{1/4^k} t^{s-3/2-1}2^{-k+1}\left((2^k-1)2t+2^k(4)^{-k}\right)~ \text{d}t 
		\\&+ \sum_{k=1}^\infty  \int_{1/4^{k+1}}^{2/4^{k+1}} t^{s-3/2-1}2^{-k+1}\left((2^k-1)(2t+4t-4^{-k})+ 2^k(4)^{-k}\right)\text~{d}t \\
		&= 2\sum_{k=1}^\infty \left\{\dfrac{2}{s-1/2}\bigg[\bigg(\dfrac{1}{4^k}\bigg)^{s-1/2}-\bigg(\dfrac{2}{4^{k+1}}\bigg)^{s-1/2}\bigg]\right.\\
		&+   \dfrac{6}{s-1/2}\bigg[\bigg(\dfrac{2}{4^{k+1}}\bigg)^{s-1/2}-\bigg(\dfrac{1}{4^{k+1}}\bigg)^{s-1/2}\bigg] \\
		& \left.-\dfrac{4^{k}}{s-3/2}\bigg[\bigg(\dfrac{2}{4^{k+1}}\bigg)^{s-3/2}-\bigg(\dfrac{1}{4^{k+1}}\bigg)^{s-3/2}\bigg]\right\}\\
		&+\sum_{k=1}^\infty \dfrac{2^{-2k+1}}{s-3/2}\left[\left(\dfrac{1}{4^k}\right)^{s-3/2}-\left(\dfrac{1}{4^{k+1}}\right)^{s-3/2}\right]\\
		&-\int_{(0,1/4)_\de} t^{s-3/2-1}|(0,1/4)|\text{d}t.
	\end{align*}
	The first series corresponds to Example \ref{lak}, multiplied by $2^k$. Thus we can write this first series as
	\begin{align*}
		2\left(\dfrac{4(1-2^{-s+1/2})}{s-1/2}+\dfrac{6(2^{-s+1/2})(1-2^{-s+1/2})}{s-1/2}-\dfrac{2^{-s+3/2}(1-2^{-s+3/2})}{s-3/2}\right)\sum_{k=1}^\infty (2^{-2s+1})^k\\
		= \phi(s)\sum_{k=1}^\infty (2^{-2s+1})^k ,
	\end{align*} 
	with the notation for $\phi$ introduced in Example \ref{lak}.
	As before, we have removable singularities at $s=1/2$ and $s=3/2$, but now our geometric series meromorphically extends as
	\[ \sum_{k=1}^\infty (2^{-2s+1})^k = \dfrac{2^{-2s}}{1-2^{-2s+1}},\]
	which has poles at $s=1/2 + \pi i n/ \log2$, for any integer $n\in\Z$. The second series (separated here only for emphasis) can be written as
	\[ \sum_{k=1}^\infty \dfrac{2^{-2k+1}}{s-3/2}\left[\left(\dfrac{1}{4^k}\right)^{s-3/2}-\left(\dfrac{1}{4^{k+1}}\right)^{s-3/2}\right] = \dfrac{1-2^{2s-3}}{s-3/2}\sum_{k=1}^\infty (2^{-2s+1})^k.\] Again, we have a removable singularity at $s=3/2$, and the same series as above shows itself. Finally, we subtract off exactly Example \ref{lak}, and so the same poles will occur. Thus we see that $\zet_A$ has a unique meromorphic extension to all of $\C$  and that the set of complex dimensions of $A$ is given by
	\[ \mathcal{D} = \mathscr{P}_A = \left\{ 0 + \dfrac{\pi i n}{\log2}, \dfrac{1}{2} + \dfrac{\pi i n}{\log2} : n\in\Z\right\} = \frac{i\pi\mathbb{Z}}{\log 2} \cup \left( \frac{1}{2} + \frac{i\pi\mathbb{Z}}{\log 2}\right),\]
	with all the complex dimensions being simple.
	Thus we recapture the dimensions of the 1/4th Cantor set itself, as well as what appears to be the complex dimensions at each of the wormhole points.
\end{example}

\subsection{Laakso Graph}


The Laakso graph, also known as the Laakso diamond space, is very similar to the Laakso space, and can be viewed as the same construction with some {\em segments} identified, rather than single points. Alternatively, we can view the Laakso graph as being constructed in a similar manner to the 1/4th-Cantor set, where instead of removing the middle halves, two copies of the middle halves are created at each step in the construction (see Figure \ref{fig: 6}, part (b)). 

To construct the Laakso graph rigorously, we follow the same procedure as in \cite{ChKl} (see also \cite{LanPl}), starting with $X_0=[0,1]$. For $i>0$, define $X_i$ by replacing each edge of $X_{i-1}$ by a $4^{-(i-1)}$ scaled copy of $\Gamma$ (see Figure \ref{fig: 6}). Then $\{X_i\}_{i=0}^\infty$ forms an inverse system \[X_0\xleftarrow{\pi_0}\ldots\xleftarrow{\pi_{i-1}} X_i\xleftarrow{\pi_i}\ldots,\] where $\pi_{i-1}:X_i\rightarrow X_{i-1}$ collapses the copies of $\Gamma$ at the $i$-th level.

Then the inverse limit $X_\infty$ is the {\bf Laakso graph}, with metric \[d_\infty (x,x')=\lim_{i\rightarrow\infty} d_{X_i}(\pi_i^\infty(x),\pi_i^\infty(x')),\] where $X_\infty$ is the Gromov-Hausdorff limit of $\{X_i\}$ and $\pi_i^\infty:X_\infty\rightarrow X_i$ is the canonical projection. 

This process can also be formalized as an Iterated Graph System, see \cite{AnErShi}. Similar to Iterated Function Systems, the limiting dimension is the log of the number of segments divided by the log of the scaling constant. For the Laakso graph, $Q=\log(6)/\log(4)$ is its regularity dimension.

\begin{figure}[t]

	\centering
	\begin{subfigure}{0.5\textwidth}
		\centering
		\includegraphics[width=160px]{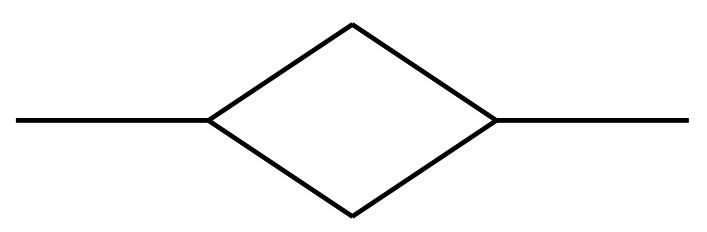}
		\subcaption{ Graph of $\Gamma$ }
	\end{subfigure}%
	\begin{subfigure}{0.5\textwidth}
		\centering
		\includegraphics[width=0.7\textwidth]{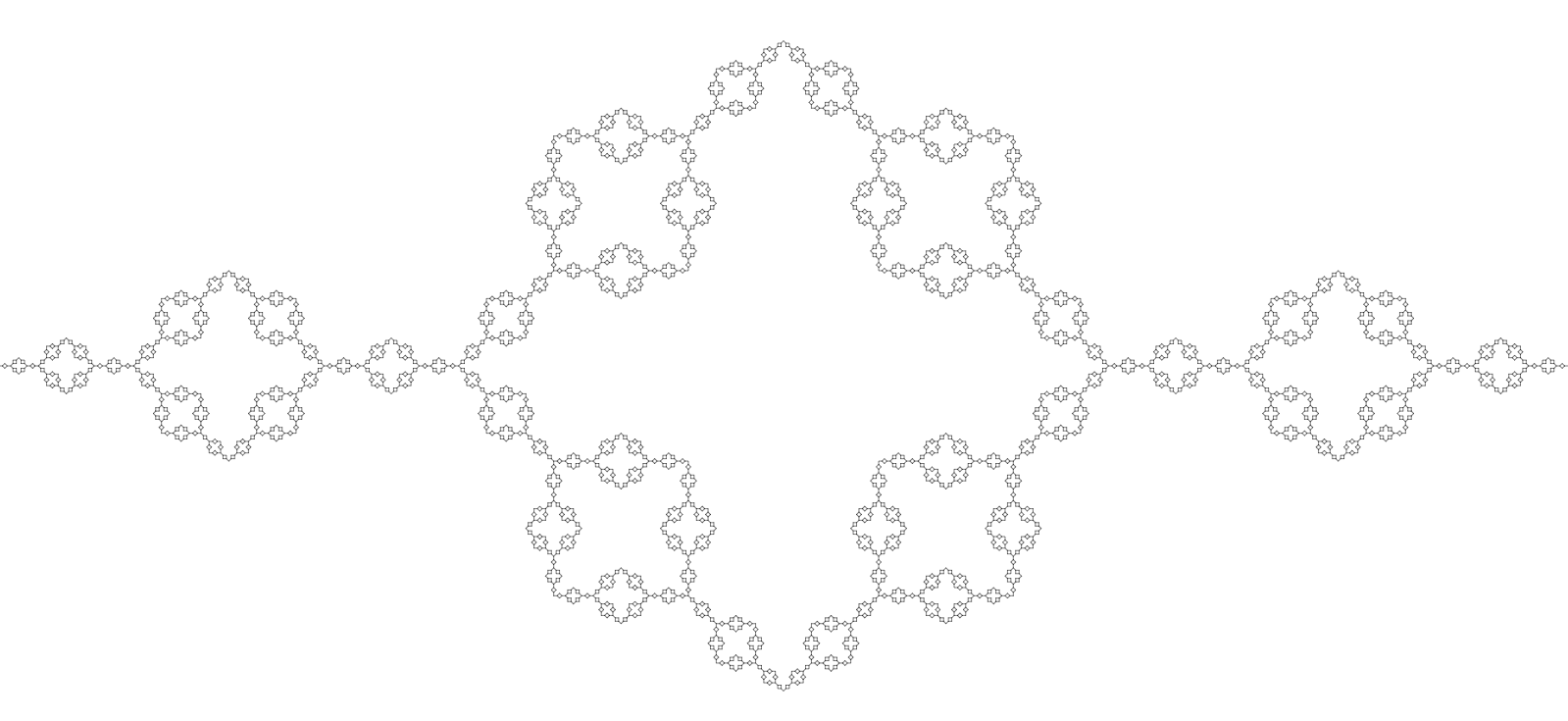}
		\subcaption{Laakso Graph}
	\end{subfigure}
	\caption[Laakso Graph]{}
	\label{fig: 6}
\end{figure}

The Laakso graph is a compact, and hence complete, metric space, which we will make into a metric measure space by using the same Bernoulli probability measure as for the Laakso space. Further, the Laakso graph cannot be  embedded in a bi-Lipschitz manner into any Euclidean space $\R^N$ (with $N\geq1$). While we visualize the graph in $\R^2$, such a visualization requires distortion. We will be working with the graph proper,  with diameter set equal to 1.

Here, we will identify points as an ordered pair using position and a (possibly infinite) word composed of three symbols; $u$ for upper path, $l$ for lower path, and $o$ for the special case of points on the neutral path. So we have $(0,o)$ and $(1/4,o)$, two choices for $1/2$ of $(1/2,u)$ and $(1/2,l)$, and four choices for $3/8$: $(3/8,uu)$, ($3/8,ul)$ and so on.

\begin{example}
	
	Let $A$ be the geodesic connecting the upper most path from $(\frac{1}{4},o)$ to $(\frac{3}{4},o)$, and let $\delta = \frac{1}{4}$. 
	
	The tubular neighborhood contains all branches to the left of $(\frac{1}{4},o)$ and to the right of $(\frac{3}{4},o)$. Since we are using a probability measure, we can visualize all these branches ``collapsing'' into a single segment. We also extend to all paths beneath this upper geodesic. When $t=1/4$, the tubular neighborhood is the entire graph. However, as $t$ decreases, the neighborhood no longer reaches $(\frac{1}{2},l)$. The neighborhood reaches all smaller branches equally, so we can view the neighborhood of all lower paths connecting $(\frac{1}{4},o)$ to $(\frac{3}{4},o)$ as two shrinking line segments, from the point of view of the measure; see Figure \ref{fig: laakso}. Because the measure is a probability measure with total mass equal to 1, the collection of these lower paths has a measure of $\frac{1}{2}\cdot 2t$.
	
	This process continues with each next level of Laakso graph for paths {\em directly connected} to the upper geodesic. When $t<\frac{1}{16}$, the tubular neighborhood no longer reaches $(\frac{3}{4},ul)$ and $(\frac{5}{8},ul)$, and so we can view the lower paths connecting  $(\frac{1}{4},o)$ to $(\frac{1}{2},u)$ and $(\frac{1}{2},u)$ to $(\frac{3}{4},o)$, respectively, as two decreasing line segments for all smaller $t$ values. Similar to above, the measure of these collective paths will be weighted by $\frac{1}{4}$.
	
	\begin{figure}[t]

		\centering
		\begin{subfigure}{0.5\textwidth}
			\centering
			\includegraphics[width=1\textwidth]{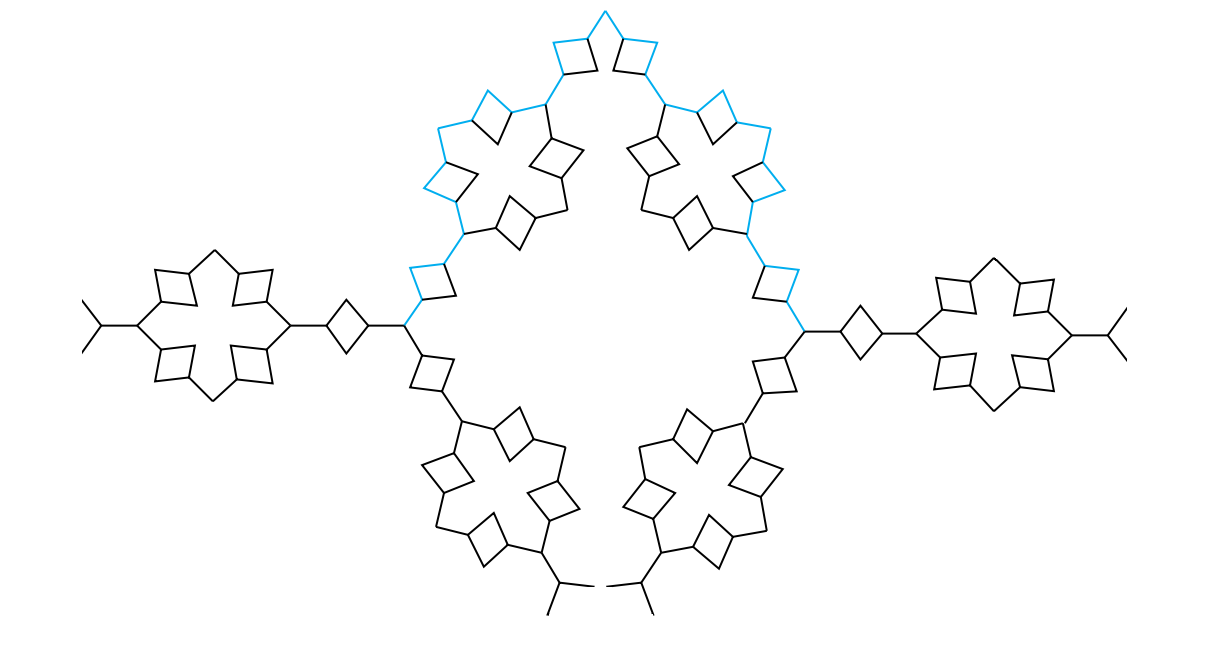}
			\subcaption{ $t=14/64$ neighborhood of $A$ }
		\end{subfigure}%
		\begin{subfigure}{0.5\textwidth}
			\centering
			\includegraphics[width=0.7\textwidth]{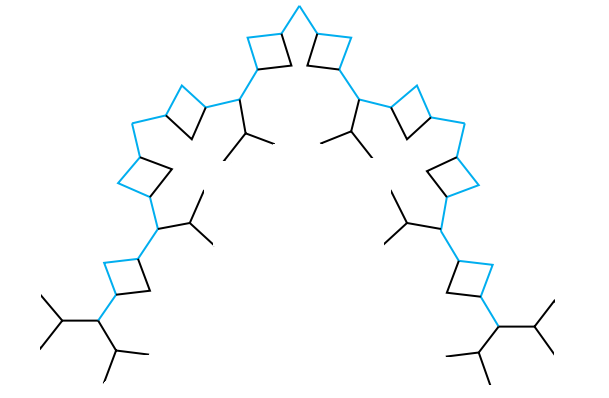}
			\subcaption{$t=2/64$ neighborhood of $A$}
		\end{subfigure}
		\caption[Laakso Graph]{}
		\label{fig: laakso}
	\end{figure}
	
	Since it is easier to keep track of what we {\em remove} as $t$ decreases, we use the above argument by subtraction to compute the measure of the tubular neighborhood. Note that the inner sum is the (measure weight at the $j^{th}$ level) times (number of segments with removals at $j^{th}$ level) times (length removed), and we see that the measure and number of segments directly cancel out. As a reminder, $Q=\log_4(6)$.
	
	\begin{equation}\label{graphseries}
		\begin{split} 
		\tilde{\zeta_A}(s) &= \sum_{k=1}^\infty \int^{4^{-k}}_{4^{-k-1}} t^{s-Q-1} |A_t| \text{d}t\\
		&= \sum_{k=1}^\infty \int^{4^{-k}}_{4^{-k-1}} t^{s-Q-1}\left(2t+\left( \frac{1}{2} - \sum_{j=1}^k 2^{-j}\cdot2^j(4^{-j}-t) \right) \right) \text{d}t\\
		&= \sum_{k=1}^\infty \int^{4^{-k}}_{4^{-k-1}} t^{s-Q-1}\left(2t+ kt + \frac{1}{2} - \left[\frac{1}{3}\left(1-\left(\frac{1}{4}\right)^k\right)\right] \right) \text{d}t\\
		&= \sum_{k=1}^\infty \int^{4^{-k}}_{4^{-k-1}} t^{s-Q-1}\left(2t+ kt + \frac{1}{6} - \frac{4^{-k}}{3} \right) \text{d}t\\
		&= \sum_{k=1}^\infty  \frac{2t^{s-Q+1}}{s-Q+1}+ \frac{kt^{s-Q+1}}{s-Q+1} + \left(\frac{1}{6} - \frac{4^{-k}}{3} \right)\frac{t^{s-Q}}{s-Q} \Big\rvert^{4^{-k}}_{4^{-k-1}} \\
		&= \frac{4^{\log_4{6}-s-1/2}}{s-\log_4{6}+1} + \frac{6}{(4^{s+1}-6)(s-\log_4{6}+1)} + \frac{4^{\log_4{6}-s-1/2}(6+4^{s+1/2})}{3(4^{s+1}-6)(s-\log_4{6})} ,\\
		\end{split}
	\end{equation}
	where we used the meromorphic continuations of the series to simplify. Hence, $\zet_A$ has a unique meromorphic continuation to all of $\C$, given for all $s\in\C$ by the last equality in (\ref{graphseries}).
	
	From these calculations and the fact that $\log_4(6)-1 = \log_4(\frac{3}{2})$, we find that the complex dimensions are simple and given by
	\[ \D =\mathscr{P}_A= \left\{\log_4{\frac{3}{2}}+\dfrac{2\pi i n}{\log 4},  \log_4{6}, \log_4{6}-1 : n\in\Z \right\} ,\] where we note that the dimension $\log_4{6}-1$ appears without nonnreal complex dimensions in the leftmost term of (\ref{graphseries}), which represents the tube zeta calculation on the stable neighborhoods to the left of $(1/4,o)$ and right of $(3/4,o)$. Geometrically, we can view these dimensions as saying this set has dimension $\log_4{6}-1$ from the endpoints, dimension $\log_4{6}$ from being a path with full dimension (which we can see from the calculation of (\ref{graphseries}) has measure 1/6), and oscillating $\log_4{\frac{3}{2}}$ dimensions coming from the branching points along $A$ that occur according to the 1/4-Cantor set.
\end{example}





\section{To Beyond}

\subsection{Patchwork Spaces}

The definition of the distance zeta function does not require the space to necessarily be an Ahlfors space. All that is needed is a metric measure space with an appropriate idea of ``ambient dimension'', or alternatively of the co-dimension, as well as a generalization of Minkowski dimension. In this section we attempt to loosen the restriction on the space in order to understand how the theory should be generalized in order to still give relevant geometric information.

To begin, we define a new type of space, more general than Ahlfors spaces:

\begin{definition}
	A space $X$ is of {\bf Ahlfors-type between dimensions $D_1$ and $D_2$} (here on, {\bf $A(D_1,D_2)$-type}) if it has both a metric $d$ and a measure $\mu$ and there exists $K>0$ such that
	$$K^{-1}r^{D_1}\leq\mu(B_r(x))\leq Kr^{D_2}$$ for all $x\in X, 0<r\leq\text{diam}X$.
\end{definition}

Given a metric measure space (with finite, normalized diameter), we can deform the metric in order to obtain a new metric space. We will use these deformations in order to construct simple examples of Ahlfors-type spaces.

\begin{definition} 
	
	Let $(M,d(x,y))$ be a metric space with diameter $\leq 1$. Given a real number $0<s<1$ consider the space $(M,d(x,y)^s)$. This is a metric space, and the transformation is called the \textit{snowflake functor}.
	\vspace{1em}
	
	Further, if $(M,d(x,y),\mu)$ is regular of dimension $D$, then $(M,d(x,y)^s,\mu)$ is regular of dimension $D/s$; see \cite{Da-Se}.
	
\end{definition}

Using the snowflake functor, we can create new Ahlfors-type spaces by ``patching together'' known spaces with each patch snowflaked in a different manner. For example, the patchwork unit interval, with segment $[0,\frac{1}{4}]$ equipped with the Euclidean metric raised to the 1/2 power, is an $A(1,2)$-type space (see Figure \ref{fig: 3}).

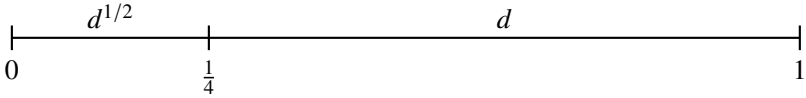
\begin{figure}[t]
	
	\centering
	
	\vfill
	
	\begin{tikzpicture} [scale=.65]
		
		\draw [thick] (0,2) -- (16,2);
		\draw [thick] (4,1.75) -- (4,2.25);
		\draw [thick] (0,1.75) -- (0,2.25);
		\draw [thick] (16,1.75) -- (16,2.25);
		\node [below] at (0,1.75) {0};
		\node [below] at (4,1.75) {$\frac{1}{4}$};
		\node [below] at (16,1.75) {1};
		\node [above] at (2,2) {$d^{1/2}$};
		\node [above] at (10,2) {$d$};
		
	\end{tikzpicture}
	
	\vfill
	
	\caption{A 1-dimensional patchwork space, the line segment $[0,1]$ with the metric on $[0,\frac{1}{4}]$ the standard Euclidean metric raised to the $1/2$ power.}
	\label{fig: 3}
	
\end{figure}

This example of space is of interest in determining the proper generalization of the distance zeta function. In a $Q$-Ahlfors space, $Q$ appears in both the distance and tube zeta functions:
\[ \ze_A(s) = \int_{A_\de} d(x,A)^{s-Q} \text{d}x , \]
\[ \zet_A(s) = \int_0^\delta t^{s-Q-1}|A_t| \text{d}t .\]
How should this exponent be changed if the dimension is no longer constant throughout our space?

We propose that, in order to obtain adequate geometrical data, the ``local dimension'' should be used, and the integral split accordingly. 

\begin{definition}
	Given a space $X$ of  {\bf $A(D_1,D_2)$-type}, we say $x\in X$ is of {\bf local dimension $D \in [D_1,D_2]$} if there exists an $r>0$ such that 
	$$K^{-1}r^{D}\leq\mu(B_r(x))\leq Kr^{D} .$$ If no such $r$ exists, then there exists $D_a < D_b \in [D_1,D_2]$ such that for some $r>0$, $$K^{-1}r^{D_a}\leq\mu(B_r(x))\leq Kr^{D_b} .$$ In such a case, we may use  the infinimum of such $D_a$ as the {\em local dimension} and call $x$ a boundary point.
\end{definition}

With this proposed definition, we can now look at some patchwork space examples:

\begin{example} 
	Take $A = [\frac{1}{8},\frac{1}{2}]$, with $\delta = \frac{1}{4}$, then the tube zeta function (for explicit calculations since $A$ is a full subset of the space) would be realized as:
	
	\begin{align*}
		\zet_A(s) &= \int_0^\delta t^{s-2-1}(t^2+\frac{1}{4}) \text{d}t + \int_0^\delta t^{s-1-1}(t+\frac{1}{4}) \text{d}t\\
		&= \frac{\de^s}{s}+\frac{\de^{s-2}}{4(s-2)}+\frac{\de^s}{s}+\frac{\de^{s-1}}{4(s-1)}.
	\end{align*}
	Hence, $\zet_A$ would have a unique meromorphic continuation to all of $\C$, given for all $s\in\C$ by this last equality.
	
	This would lead to the set of complex dimensions of $A$ being
	\[ \D =\mathscr{P}_A = \{ 0, 1, 2\}, \]
	accounting for the endpoints, the one-dimensional section existing in $[\frac{1}{4}, \frac{1}{2}]$, and the two-dimensional section existing in $[ \frac{1}{8},\frac{1}{4} ]$.
\end{example}

Conversely, the use of a global dimension provides counter-intuitive results. For example, letting $Q=2$ throughout our space would lead any point in $[\frac{1}{4},1]$ to be of dimension 1, and any line segment to be of dimension 2. Not only would the geometric information obtained be muddled, it would not conform to dimensions of line segments and points in standard $\R$. Similar problems occur if $Q=1$ is chosen to be the global dimension, with a line segment in $[0,\frac{1}{4}]$ attaining dimension 0 with endpoints of dimension -1.

\begin{figure}
	\centering
	
	\vfill
	
	\begin{tikzpicture} [scale=.7]
		\draw [thick] (0,0)--(8,0)--(8,8)--(0,8)--(0,0);
		\draw [thick] (4,0)--(4,8);
		\draw [thick] (0,4)--(8,4);

		\draw [fill=gray] (0,0) rectangle (4,4);
		\draw [fill=cyan] (0,4) rectangle (4,5);
		\draw [fill=cyan] (4,4) rectangle (5,0);
		\draw [fill=purple] (4,4) rectangle (6,6);
		
		\node at (2,3) {$d^{1/3}$};
		\node at (2,7) {$d^{1/2}$};
		\node at (6,3) {$d^{1/2}$};
		\node at (6,7) {$d$};
		
	\end{tikzpicture}
	\caption{The $\de = 1/4$ neighborhood of $A$ with the $\ell_\infty$ metric}
	\label{fig: 4}
	
\end{figure}
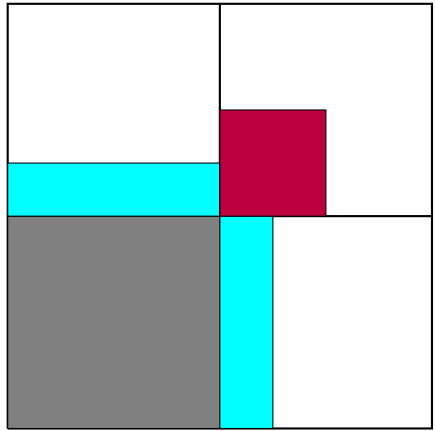

Just as we did with the line segment, we can create new patchwork spaces out of the unit square, or any other subset of $\R^N$ with normalized, finite diameter. Let us take the unit square and split it into fourths, giving each fourth a different snowflake metric (viewed as quadrants, we use $d$ in QI, $d^{1/2}$ in QII and QIV, and $d^{1/3}$ in QIII), with the condition that the shared edge belongs to the minimizing distance. If we use $d$ as the $\ell_\infty$  metric, we will have an $A(1,6)$-type space with the new metric given by the minimizing path .

\begin{example}
	
	Let $A=[0,1/2]\times[0,1/2]$ and $\delta\in(0,1/2)$; see Figure \ref{fig: 4}. Then using the tube zeta function, we find that
	
	\begin{align*}
		\tilde{\zeta_A}(s) &= \int_0^\delta t^{s-6-1}\Big(\frac{1}{4}\Big)\text{d}t + \int_0^\delta t^{s-4-1}\Big(\frac{1}{2}t^2\cdot2\Big)\text{d}t + \int_0^\delta t^{s-2-1}t^2\text{d}t
		\\	&= \frac{1}{4}\dfrac{\delta^{s-6}}{s-6}+\dfrac{\delta^{s-2}}{s-2} + 2 \dfrac{\delta^{s-1}}{s-1} -  \dfrac{t^s}{s}.
	\end{align*}
	Hence, $\zet_A$ would have a unique meromorphic continuation to all of $\C$, given for all $s\in\C$ by this last equality.
	This implies that the set of complex dimensions of $A$ is given by 
	\[ \D =\mathscr{P}_A = \{0,1,2,6\}, \]
	with all of the complex dimensions being simple and exact.
\end{example}

If instead we started with $d$ as the $\ell_1$ (taxi-cab) metric (see Figure \ref{fig: 5}), we find new, initially surprising dimensions, as shown in the next example.

\begin{example}
	
	\begin{align*}
		\tilde{\zeta_A}(s) &= \int_0^\delta t^{s-6-1}\Big(\frac{1}{4}\Big) + t^{s-4-1}(t^2 + (t-t^2)(t-t^2)^2) +  \frac{1}{2} t^{s-2-1}t^2\text{d}t
		\\	&= \frac{1}{4}\dfrac{\delta^{s-6}}{s-6}+\dfrac{\delta^{s-2}}{s-2} +  \dfrac{\delta^{s-1}}{s-1} -  \dfrac{t^s}{s} + 3 \dfrac{t^s+1}{s+1} - \dfrac{t^{s+2}}{s+2}.
	\end{align*}
	Hence, $\zet_A$ would have a unique meromorphic continuation to all of $\C$, given for all $s\in\C$ by this last equality.
	This implies that the set of complex dimensions of $A$ is given by
	\[ \D =\mathscr{P}_A=\{ -2, -1, 0, 1, 2, 6\},\]
	with all of the complex dimensions being simple and exact.
	
\end{example}

\begin{figure}
	\centering
	
	\vfill
	
	\begin{tikzpicture} [scale=.8]
		\draw [thick] (0,0)--(8,0)--(8,8)--(0,8)--(0,0);
		\draw [thick] (4,0)--(4,8);
		\draw [thick] (0,4)--(8,4);
		
		\draw [fill=gray] (0,0) rectangle (4,4);
		\draw [fill=cyan] (0,4) rectangle (4,5);
		\draw [fill=cyan] (4,4) rectangle (5,0);
		\draw [fill=purple] (4,4) -- (6,4) -- (4,6) -- (4,4);
		\draw [fill=red] (4,5) -- (7/2,5) -- (4,6) -- (4,5);
		\draw [fill=red] (5,4) -- (5,7/2) -- (6,4) -- (5,4);
		\node at (2,3) {$d^{1/3}$};
		\node at (2,7) {$d^{1/2}$};
		\node at (6,3) {$d^{1/2}$};
		\node at (6,7) {$d$};
		
	\end{tikzpicture}
	\caption{The $\de = 1/4$ neighborhood of $A$ with the $\ell_1$ metric}
	\label{fig: 5}
	
\end{figure}
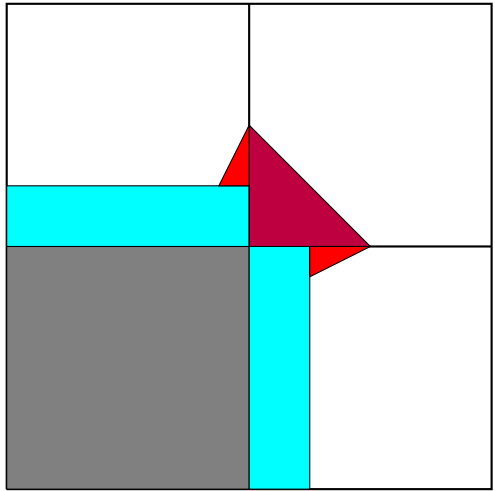

These negative dimensions appear due to geodesics that utilize the lower-dimensional boundary connecting the patches, where in this instance the negative dimensions seem to indicate a ``faster'' travel time than should be possible in standard Ahlfors spaces.

\section{Concluding Comments}

From the above examples, it is clear that the distance (and consequently, tube) zeta function should be extended to more general spaces than simply Ahlfors spaces. Of course, neither the patchwork spaces or the Laakso graph are far removed from Ahlfors spaces, and much is yet unknown about spaces in which the geometry changes throughout the space. While analysis in metric measure spaces is well studied, most examples of such spaces are, in practice, Ahlfors spaces. Of course, the study of such spaces is still recent, and it would be interesting to look at far more general examples than those above.

The above examples show that a notion of ``local dimension'' is necessary if we wish to capture relevant geometric data about any given set. In our examples the local dimension was obvious, but it is still a question of how to define such a dimension in general. It is easy to imagine a space in which the dimension shifts in a continuous manner, but even in this setting it is difficult to both pinpoint the local dimension at any point, and to further show that the complex dimensions of the distance zeta function can be interpreted as geometric information.

It is also currently unknown how general such spaces that admit a local dimension are. It may be possible that, with the correct definition of dimension, the theory of fractal zeta functions can be extended to any doubling metric space, in the sense of the complex dimensions truly giving geometric data. However, it seems likely that, for this purpose, some necessary conditions on the regularity of the space will be needed.

We note that after the work corresponding to this paper was completed, and motivated by the present work and the theory developed in \cite{LaRaZu}, Alexander Henderson has obtained some partial results in this direction \cite{Hen}.

In addition to studying complex dimensions, we believe that the fractal tube formulas, as introduced and established in Chapter 5 of \cite{LaRaZu} (see also \cite{LaRaZu6}), should generalize to Ahlfors spaces in a similar manner. These formulas allow one to reconstruct certain information about a fractal drum by taking a sum over the complex dimensions of the residues of its distance or tube zeta function, viewed as a type of generalization of the classical Steiner tube formulas for compact convex sets. Indeed, examples of tube formulas in Ahlfors spaces may provide the answer to what properties a ``local dimension'' must satisfy in order for further expansion of the theory.

The investigation of such general fractal tube formulas will be carried out in a later work by the authors (\cite{LaWat}).



\begin{funding}
The research of M.L. Lapidus was supported by the Burton Jones Endowed Chair Fund in Pure Mathematics, as well as by grants from the U.S. National Science Foundation (NSF) and by a grant from the French Agence Nationale pour la Recherche (ANR FRACTALS, FANR-24-CE45-3362).
\end{funding}


\end{document}